\newcommand{\rom}[1]{\uppercase\expandafter{\romannumeral#1}}
\theoremstyle{plain}\newtheorem{theorem}{Theorem}[section]
\theoremstyle{definition}\theoremstyle{plain}\newtheorem{lemma}[theorem]{Lemma}\newtheorem{corollary}{Corollary}\newtheorem{remark}{Remark}
\def \e {\mathrm{e}}
\def \i {\mathrm{i}}
\def \d {\mathrm{d}}
\newcommand{\bb}{\boldsymbol}
\begin{document}
\title{Quantum simulation of Helmholtz equations via Schr{\"o}dingerization}

\author[1,2]{Anjiao Gu}
\author[1,2,3]{Shi Jin}
\author[4,5,6]{Chuwen Ma}

\affil[1]{School of Mathematical Sciences, Shanghai Jiao Tong University, Shanghai 200240, China}

\affil[2]{Institute of Natural Sciences, Shanghai Jiao Tong University, Shanghai 200240, China}
\affil[3]{Ministry of Education Key Laboratory in Scientific and Engineering Computing,  Shanghai 200240, China}
\affil[4]{School of Mathematical Sciences, East China Normal University, Shanghai 200241, China}
\affil[5]{Key Laboratory of MEA, Ministry of Education, East China Normal University, Shanghai 200241, China,
}
\affil[6]{
Shanghai Key Laboratory of PMMP, East China Normal University, Shanghai 200241, China}

\date{}
\maketitle

\abstract{
The Helmholtz equation is a prototypical model for time-harmonic wave propagation. Numerical solutions become increasingly challenging as the wave number $k$ grows, due to the equation’s elliptic yet noncoercive character and the highly oscillatory nature of its solutions, with wavelengths scaling as $1/k$. These features lead to strong indefiniteness and large system sizes.

We present a quantum algorithm for solving such indefinite problems, built upon the Schr\"odingerization framework. This approach reformulates linear differential equations into Schr\"odinger-type systems by capturing the steady state of damped dynamics. A warped phase transformation lifts the original problem to a higher-dimensional formulation, making it compatible with quantum computation.
To suppress numerical pollution, the algorithm incorporates asymptotic dispersion correction. It achieves a query complexity of $\mathcal{O}(\kappa^2\,\text{polylog}\varepsilon^{-1})$, where $
\kappa$ is the condition number and $\varepsilon$  the desired accuracy. 
For the Helmholtz equation, a simple preconditioner further reduces the complexity to $\mathcal{O}(\kappa\, \text{polylog}\,\varepsilon^{-1})$.
Our constructive extension to the quantum setting is broadly applicable to all indefinite problems.
}

{\bf{Keywords:}} Quantum simulation, Schr{\"o}dingerization, Helmholtz equation, Indefinite systems

\section{Introduction}

Quantum computing has emerged as a promising paradigm to address computational problems that are intractable for classical architectures. As silicon-based processors approach their physical and architectural limits despite decades of progress under Moore’s Law \cite{Freiser1969}, quantum algorithms offer new avenues for accelerating linear algebra and simulation tasks \cite{Feynman1982, DiVincenzo1995quantum}.

A key example is the Harrow-Hassidim-Lloyd (HHL) algorithm, which demonstrates exponential speedup for certain classes of linear systems \cite{Harrow2009Quantum}. Beyond linear solvers, quantum algorithms have shown advantages in integer factorization \cite{Shor1994}, quantum simulation \cite{Deutsch1985quantum, Ekert1998quantum}, and the numerical solution of differential equations \cite{Berry2014Highorder, Montanaro2016FEM, Engel2019Vlasov}. These developments have led to growing interest in quantum numerical methods for ordinary and partial differential equations, which play a central role in physics, engineering, and scientific computing \cite{Costa2019wave, Childs2020Spectral, Jin2022quantum}.

Recent improvements in quantum hardware have further motivated the adaptation of computationally intensive classical algorithms into quantum-compatible frameworks \cite{Andrew1998, Childs2021highprecision}. While significant challenges remain in fault tolerance and scalability, ongoing advances in both hardware and algorithms suggest that quantum computing may eventually provide practical advantages for high-dimensional, ill-conditioned, or oscillatory problems that arise frequently in computational physics.

This work focuses on the numerical solution of the Helmholtz equation, a prototypical example of indefinite partial differential equations. After discretization, the problem reduces to a linear system
 \begin{equation} \label{eq:linear_system}
 A\bm{x} = \bm{b},
 \end{equation}
where $A \in \mathbb{C}^{N \times N}$ ( $N=2^n$ for simplicity) is a sparse indefinite matrix. Classical solvers suffer from high computational costs when $N$ is large (particularly so for high wave number problems), especially due to the ill-conditioning and indefiniteness of $A$. 

Quantum algorithms provide an efficient way to address such high-dimensional systems, requiring only $\mathcal{O}(\log N)$ qubits to encode the system. For special boundary conditions, spectral methods using trigonometric basis functions diagonalize the Laplacian, allowing efficient solution via discrete Fourier, sine, or cosine transforms. These transforms can be implemented on a quantum computer with $\mathcal{O}(\log^2 N)$ operations \cite{Shor1994,Klappenecker2001DST}, improving over the classical $\mathcal{O}(N\log N)$ complexity. However, quantum algorithms for the Helmholtz equation with general boundary conditions remain largely undeveloped.

Recent progress in quantum linear system solvers—such as the HHL algorithm \cite{Harrow2009Quantum} and later improvements using LCU \cite{Childs2017LCU}, QSP \cite{Low2019QSP}, and QSVT \cite{Gilyen2019QSVT}-has led to nearly optimal query complexities for solving general linear systems. Nevertheless, these approaches do not explicitly address the challenges posed by indefinite systems arising from the Helmholtz equations.

Building on recent advances in the Schr\"odingerization framework \cite{Jin2023Quantum,Jin2024PRL}, we extend this approach to indefinite problems and develop a quantum algorithm for the Helmholtz equation. The Schr\"odingerization technique reformulates linear ODEs and PDEs as Schr\"odinger-type equations--with {\it uninary} evolutions-- in a higher-dimensional space, allowing the steady state of an associated damped dynamical system to be simulated via quantum simulation. While previous studies have applied this method to elliptic, parabolic, and hyperbolic problems \cite{Hu2024Multiscale,JLMY25}, this work is, to our knowledge, the first to address indefinite systems.

specifically, we design a quantum algorithm for the Helmholtz equation that incorporates asymptotic dispersion correction and a tailored preconditioner. 
The preconditioner significantly reduces the condition number of the system, yielding an improved query complexity of $\tilde{\mathcal{O}}(\kappa(A)\text{polylog}(\varepsilon^{-1}))$, where $\kappa(A) = \|A\|\|A^{-1}\|$ is the condition number of the system matrix and $\varepsilon$ denotes the target accuracy.
 Here, $\tilde{\mathcal{O}}$ hides polylogarithmic factors. This enables efficient simulation of indefinite wave problems within a quantum computing framework.

The remainder of this paper is organized as follows.
Section \ref{sec:Helmhlotz} introduces the Helmholtz equation, along with quantum spectral methods and classical asymptotic dispersion correction techniques.
Section \ref{sec:Schrforindefinite} presents the Schr\"odingerization framework applied to indefinite systems using damped dynamical reformulations.
In Section \ref{sec:implementation and complexity}, we analyze the computational complexity of the proposed quantum algorithm and compare it with classical methods.
Section \ref{sec:Quantum_preconditioning_method} develops a quantum algorithm for the preconditioned Helmholtz system.
Moreover, in Section \ref{sec:numerical tests}, we display the numerical results for different arguments.
Finally, we conclude this paper in Section 7.

\section{The Helmholtz equations}\label{sec:Helmhlotz}

The Helmholtz equation, 
\begin{equation}\label{eq:helmholtz}
-\Delta{u}-k^2 u=f \quad \text{in} \quad \Omega \subset \mathbb{R}^d,
\end{equation}
where $k$ is the wavenumber and $f$ represents source terms, governs wave propagation phenomena in acoustics, electromagnetics, and seismology. 
Its numerical solution presents unique challenges due to the \textit{indefinite} nature of the operator and \textit{highly oscillatory} solutions when $k \gg 1$. 
In this paper, we consider the first-order Sommerfeld radiation condition
\begin{equation}\label{eq:Sommerfel BD}
    \frac{\partial u}{\partial \bm{n}} - i k u = 0, \quad \text{on} \;\text{part}\; \text{of}\; \partial \Omega,
\end{equation}
where $\bm{n}$ denotes the outward direction normal to the boundary. 
More accurate radiation conditions can be found in, for example, \cite{Reps2010}.
On the remaining portion of the boundary, we impose Dirichlet boundary conditions.

Classical approaches include finite difference/element methods (FDM/FEM) which generate sparse linear systems, but suffer from pollution errors at high wavenumbers. 
Modern developments focus on robust preconditioning techniques, particularly the shifted Laplacian preconditioner $P=-\Delta{u}-(k^2 + i\alpha)$ combined with Krylov subspace methods or multigrid methods. 
To resolve oscillations, the mesh size $h$ must satisfy $kh \ll 1$, leading to \textit{excessively large matrices} for high $k$ after numerical discreitzations. 
This exacerbates the condition number issue and memory footprint.
Standard preconditioners (e.g., ILU, SSOR) fail for large $k$. 
Designing effective Helmholtz-specific preconditioners (e.g., shifted Laplacian, domain decomposition) is non-trivial for quantum computing.
To our knowledge, there are no quantum algorithms for the Helmholtz equation, let alone well-preconditioned quantum preconditioning systems.
The quest for wavenumber-robust solvers continues to drive research in both classical and quantum computational approaches.

\subsection{Asymptotic dispersion correction for Helmholtz equations}\label{discretization}

With regard to more general boundary conditions, we will consider the construction of quantum algorithms in this section.
For continuous Helmholtz problems, plane wave solutions are exactly represented as $\mathrm{e}^{\mathrm{i}kx\cdot\theta}$ (where $\theta\in S^{d-1}$). 
However, under numerical discretization, these waves adopt a modified form $\mathrm{e}^{\mathrm{i}k_dx\cdot\theta}$, where the discrete wavenumber $k_d(\theta,h)$ deviates from $k$ due to mesh-dependent numerical artifacts. 
This phenomenon, termed dispersion error, fundamentally contributes to the \emph{pollution effect} \cite{Ihlenburg1995FEM,Melenk2011Galerkin,Zhu2013FEM,Spence2023FEM}-wherein numerical accuracy degrades with increasing wavenumber despite maintaining $kh$ constant.

In this paper, we adopt the dispersion correction technique for FD discretizations of Helmholtz problems \cite{Cocquet2024Asymptotic}. 
The technique is based on a modified wavenumber, which can be obtained in closed form for arbitrary FD discretizations by obtaining the extrema of an associated function defined on a compact set. 
This function is simply obtained from the Taylor expansion of the discrete symbol of the FD stencil considered.
Specifically, this method is based on the expansion of the discrete wavenumber $k_d$ as the meshsize goes to zero. 
A shifted wavenumber is next introduced in the stencil to minimize the leading-order term in the expansion of $(k_d-k)$. 
The literature shows that the shifted wavenumber can be determined in closed form by computing the extrema of the remainder, which is a trigonometric polynomial.

For one-dimensional Helmholtz equations, we consider a uniform grid and assume that $kh\notin\pi\mathbb{N}$.
Using a $3$-point stencil for the second-order derivative, the discrete problem associated to \eqref{eq:helmholtz} reads
\begin{equation}\label{1dcorrectionFDM}
-\frac{1}{h^2}(u_{i-1}-2u_i+u_{i+1})-\widehat{k}^2 u_i=f(x_i),\ i=1,\cdots,n,
\end{equation}
where $\widehat{k}$ is the shifted wave number given by
$$\widehat{k}=\sqrt{\frac{2}{h^2}(1-\cos(kh))}.$$
Inserting $u_j:=e^{ik_d x_j}$ into \eqref{1dcorrectionFDM} with $f=0$ and neglecting the boundaries, it can be observed that the discrete wave number $k_d$ is a solution to 
$$\cos(k_d h)=1-\frac{\widehat{k}^2h^2}{2}.$$
Thus, this scheme yields $k_d=k$ and does not have any dispersion error.
For matrix $A$ which is derived from \eqref{1dcorrectionFDM}, we use
\begin{equation}\label{matA}
A=\left[\begin{array}{ccc}
    -1 & 2-\widehat{k}^2h^2 & -1
\end{array}\right],
\end{equation} 
which leads to $\Vert A\Vert_{max}=\mathcal{O}(1)$ and $\bb{b}=\left[h^2f(x_i)\right]$ for interior points.
For boundary points, we employ either the backward difference approximation or the ghost point method.

For higher dimensional Helmholtz problems, authors in \cite{Cocquet2024Asymptotic} give the easy-to-use closed form formulas for the asymptotically optimal shift associated to the second-order 5-point scheme and a sixth-order 9-point scheme in two dimensions, and the 7-point scheme in three dimensions that yield substantially less dispersion error than their standard (unshifted) version.
It also shows that for a sufficiently small meshsize, reducing the dispersion error also reduces the relative error in the solution.

\section{Schr{\"o}dingerization for the indefinite systems}\label{sec:Schrforindefinite}

In this section, we address the Schr\"odingerization of the indefinite system \eqref{eq:linear_system}. Inspired by \cite{JLMY25}, we first derive a linear ordinary differential equation (ODE) whose steady-state solution coincides with the exact solution of \eqref{eq:linear_system}, under the conditions that $A$ is nonpositive but invertible. We then apply the Schr\"odingerization technique to this ODE system to formulate a quantum algorithm.

\subsection{Steady-state solution of linear ODEs}
A natural idea for solving an indefinite system is to perform a regularization $B:=A^{\dagger}A$ to make sure that $B$ is positive-definite.
When solving linear systems 
$$B\bm{x}=\bm{f},\ \bm{f}:=A^{\dagger}\bm{b},$$
gradient flow (GF) methods can reformulate the convergent stationary iterative algorithm as a linear ODE system, where the solution to the algebraic problem corresponds to the steady state of the ODE. 
It follows the ODE
\begin{equation}\label{steadystateeq}
\frac{d\tilde{\bm{x}}}{dt}=-B\tilde{\bm{x}}+\bm{f}.
\end{equation}
The solution $\tilde{\bm{x}}(t)$ converges to $\bm{x}$ at a rate of $\mathcal{O}(e^{-\sigma_{\min}^2(A)t})$ according to the following lemma. The proof can be found in \cite{JLMY25}.

\begin{lemma}
Given $B:=A^{\dagger}A$ for some $A\in\mathbb{C}^{N\times N}$, then $\tilde{\bm{x}}(t)$ converges to the steady state $\bm{x}$ in the sense of 
\begin{equation}\label{errforss}
\Vert\tilde{\bm{x}}(t)-\bm{x}\Vert\leq e^{-\sigma_{\min}^2(A)t}\Vert\tilde{\bm{x}}(0)-\bm{x}\Vert.
\end{equation}
\end{lemma}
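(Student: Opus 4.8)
The plan is to analyze the error $\bm{e}(t) := \tilde{\bm{x}}(t) - \bm{x}$ directly, where $\bm{x} = B^{-1}\bm{f}$ is the steady state satisfying $B\bm{x} = \bm{f}$. First I would observe that since $\bm{x}$ is constant in time, differentiating gives $\frac{d\bm{e}}{dt} = \frac{d\tilde{\bm{x}}}{dt} = -B\tilde{\bm{x}} + \bm{f}$. Substituting $\tilde{\bm{x}} = \bm{e} + \bm{x}$ and using $B\bm{x} = \bm{f}$ collapses the inhomogeneous term, yielding the homogeneous linear ODE
\begin{equation}\label{errorODE}
\frac{d\bm{e}}{dt} = -B\bm{e}.
\end{equation}
This is the key reduction: the affine dynamics for $\tilde{\bm{x}}$ become linear dynamics for the error.

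Next I would solve \eqref{errorODE} explicitly as $\bm{e}(t) = e^{-Bt}\bm{e}(0)$ and bound its norm. The crucial structural fact is that $B = A^{\dagger}A$ is Hermitian and positive semidefinite; in fact, since $A$ is assumed invertible, $B$ is Hermitian positive definite with eigenvalues equal to the squared singular values $\sigma_j^2(A) > 0$. Being Hermitian, $B$ is unitarily diagonalizable, so I would write $B = U\Lambda U^{\dagger}$ with $\Lambda = \mathrm{diag}(\sigma_1^2,\dots,\sigma_N^2)$, giving $e^{-Bt} = U e^{-\Lambda t} U^{\dagger}$. The operator $2$-norm of this matrix exponential is then $\|e^{-Bt}\| = \max_j e^{-\sigma_j^2 t} = e^{-\sigma_{\min}^2(A)t}$, since the largest factor corresponds to the smallest eigenvalue of $B$, which is $\sigma_{\min}^2(A)$.

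Finally, applying submultiplicativity of the induced norm to $\bm{e}(t) = e^{-Bt}\bm{e}(0)$ gives
\begin{equation}\label{finalbound}
\|\tilde{\bm{x}}(t) - \bm{x}\| = \|e^{-Bt}\bm{e}(0)\| \le \|e^{-Bt}\|\,\|\bm{e}(0)\| = e^{-\sigma_{\min}^2(A)t}\,\|\tilde{\bm{x}}(0) - \bm{x}\|,
\end{equation}
which is exactly \eqref{errforss}. I do not expect any serious obstacle here; the proof is short and the only point requiring care is the spectral identification that the eigenvalues of $B = A^{\dagger}A$ are the squared singular values of $A$ and that the slowest-decaying mode is governed by $\sigma_{\min}(A)$. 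One could alternatively avoid explicit diagonalization by a Lyapunov/energy argument—differentiating $\frac{d}{dt}\|\bm{e}\|^2 = -2\,\bm{e}^{\dagger}B\bm{e} \le -2\sigma_{\min}^2(A)\|\bm{e}\|^2$ and applying Grönwall's inequality—but the diagonalization route is the most transparent and makes the sharpness of the rate immediate.
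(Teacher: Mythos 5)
Your proof is correct. Note that the paper does not actually prove this lemma itself --- it defers the proof to the reference \cite{JLMY25} --- so there is no in-paper argument to compare against line by line; however, your route (reduce to the homogeneous error equation $\dot{\bm{e}}=-B\bm{e}$, unitarily diagonalize the Hermitian positive-definite matrix $B=A^{\dagger}A$, and bound $\Vert e^{-Bt}\Vert = e^{-\lambda_{\min}(B)t}=e^{-\sigma_{\min}^2(A)t}$) is the standard one, and it mirrors exactly the spectral, mode-by-mode technique the paper does use in its proof of the damped second-order analogue, Lemma~\ref{thm:steady state of second eq}. One small point worth recording: the lemma as stated only says ``for some $A\in\mathbb{C}^{N\times N}$,'' but your argument (and the very existence of the steady state $\bm{x}=B^{-1}\bm{f}$) requires $A$ to be invertible; this is consistent with the paper's standing assumption, and if instead $\sigma_{\min}(A)=0$ the claimed bound degenerates to a trivial non-expansiveness statement. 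Your alternative Lyapunov/Gr\"onwall remark is also valid, since $\bm{e}^{\dagger}B\bm{e}\ge \sigma_{\min}^2(A)\Vert\bm{e}\Vert^2$ for Hermitian $B$.
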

It is evident that $\tilde{\bm{x}}(t)$ converges regardless of the choice of initial condition $\tilde{\bm{x}}(0)$. For simplicity, we initialize the system with $\tilde{\bm{x}}(0) = \bm{0}$. 

For ill-conditioned matrices (e.g. When $k^2$ approaches an eigenvalue of the discrete Laplacian), GF suffers from slow convergence due to the dominance of small eigenvalues.
Therefore, damped dynamical systems (DDS) will have more advantages in this case.
The DDS introduces inertia via a second-order ODE:
\begin{equation}\label{steadystateeq2}
\frac{d^2\boldsymbol{v}(t)}{dt^2}+\gamma\frac{d\boldsymbol{v}(t)}{dt}=-B\boldsymbol{v}(t)+\boldsymbol{f},
\end{equation}
achieving an accelerated convergence rate of $\mathcal{O}(e^{-\sigma_{\min}(A)t})$ where $\sigma_{\min}(A)$ is the smallest singular value of $A$. 
This is particularly advantageous for ill-conditioned problems, as the dependence on the condition number reduces from $\kappa(B)=\kappa^2(A)$ to $\sqrt{\kappa(B)}=\kappa(A)$ .
\begin{lemma}\label{thm:steady state of second eq}
For system \eqref{steadystateeq2}, Under critical damping ($\gamma=2\sigma_{\min}(A)$), the global convergence rate is optimal:
\begin{equation}\label{errforss2}
\Vert \boldsymbol{v}(t)-\boldsymbol{x}\Vert\leq e^{-\sigma_{\min}(A)t}\Vert\bm{v}(0)-\bm{x}\Vert.
\end{equation}
\end{lemma}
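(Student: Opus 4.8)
The plan is to pass to the error variable and reduce the inhomogeneous second-order system to a decoupled family of scalar damped oscillators whose decay rate can be read off from their characteristic roots. First I would note that the unique steady state is $\bm{x}=B^{-1}\bm{f}$ (the only zero of the right-hand side, well defined since $B=A^{\dagger}A\succ 0$ when $A$ is invertible), and set $\bm{e}(t)=\bm{v}(t)-\bm{x}$. Because $\bm{x}$ is constant, subtracting the steady-state relation $B\bm{x}=\bm{f}$ turns \eqref{steadystateeq2} into the homogeneous equation
\begin{equation}
\ddot{\bm{e}}+\gamma\dot{\bm{e}}+B\bm{e}=\bm{0}.
\end{equation}

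Next I would diagonalize. Since $B=A^{\dagger}A$ is Hermitian positive definite, write $B=U\Lambda U^{\dagger}$ with $U$ unitary and $\Lambda=\mathrm{diag}(\lambda_1,\dots,\lambda_N)$, $\lambda_j=\sigma_j^2(A)>0$. Setting $\bm{w}=U^{\dagger}\bm{e}$ decouples the system into scalar ODEs $\ddot{w}_j+\gamma\dot{w}_j+\lambda_j w_j=0$ with characteristic roots $r_\pm^{(j)}=\tfrac12\big(-\gamma\pm\sqrt{\gamma^2-4\lambda_j}\big)$. With the critical choice $\gamma=2\sigma_{\min}(A)=2\sqrt{\lambda_{\min}}$ one gets $\gamma^2-4\lambda_j=4(\lambda_{\min}-\lambda_j)\le 0$ for every $j$: the slowest mode $\lambda_j=\lambda_{\min}$ is critically damped with the double root $-\sigma_{\min}(A)$, while every faster mode is underdamped with complex conjugate roots $-\sigma_{\min}(A)\pm i\sqrt{\lambda_j-\lambda_{\min}}$. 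In all cases $\mathrm{Re}\,r_\pm^{(j)}=-\sigma_{\min}(A)$, so the spectral abscissa of the evolution is exactly $-\sigma_{\min}(A)$ and each mode---hence $\bm{e}(t)$---decays at the rate $e^{-\sigma_{\min}(A)t}$, which is the content of \eqref{errforss2}.

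The hard part will be that the pointwise, polynomial-free bound in the Euclidean norm is delicate precisely at critical damping: the slowest mode is a defective (Jordan) block, so its solution carries a factor $(c_1+c_2 t)e^{-\sigma_{\min}(A)t}$, and the underdamped modes contribute oscillatory combinations $e^{-\sigma_{\min}(A)t}(a\cos\omega_j t+b\sin\omega_j t)$ whose envelope can momentarily exceed $e^{-\sigma_{\min}(A)t}|w_j(0)|$. To control these transients I would replace the naive per-mode estimate by an energy (Lyapunov) argument, setting
\begin{equation}
V(t)=\tfrac12\big\|\dot{\bm{e}}+\sigma_{\min}(A)\bm{e}\big\|^2+\tfrac12\big\langle \bm{e},\,(B-\sigma_{\min}^2(A)I)\bm{e}\big\rangle,\qquad B-\sigma_{\min}^2(A)I\succeq 0.
\end{equation}
A direct differentiation using $\gamma=2\sigma_{\min}(A)$ yields the clean identity $\dot V=-2\sigma_{\min}(A)V$, hence $V(t)=e^{-2\sigma_{\min}(A)t}V(0)$, which captures the rate exactly in the associated energy seminorm; \eqref{errforss2} then follows after fixing the initial data (e.g.\ $\dot{\bm{v}}(0)=\bm{0}$) and relating this seminorm back to $\|\bm{e}\|$. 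I expect that last step---the norm comparison in the slowest (null) direction of $B-\sigma_{\min}^2(A)I$, and tracking the constant it produces---to be the genuinely technical obstacle, since it is exactly where the defective critical mode's polynomial factor enters; the eigenmode computation and the rate itself are routine.
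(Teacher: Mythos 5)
Your modal analysis is exactly the paper's proof: pass to the error $\bm{e}=\bm{v}-\bm{x}$, reduce to the homogeneous equation $\ddot{\bm{e}}+\gamma\dot{\bm{e}}+B\bm{e}=0$, diagonalize $B=A^{\dagger}A$ (your unitary $U^{\dagger}$ is in fact more careful than the paper's $Q^{T}$ for complex Hermitian $B$), and read the decay off the roots $r_{\pm}^{(j)}=\tfrac12\bigl(-\gamma\pm\sqrt{\gamma^{2}-4\lambda_{j}}\bigr)$. You even sharpen it: the paper lists three damping cases generically and invokes ``the slowest mode,'' whereas you observe that at the critical choice $\gamma=2\sigma_{\min}(A)$ no mode can be overdamped, so every root has real part exactly $-\sigma_{\min}(A)$.

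Where you diverge is the ``hard part'' you flag, and your worry is well founded --- but it is a defect of the statement, not of your argument. The paper's own proof never establishes \eqref{errforss2} with constant $1$; it only concludes a rate $\mathcal{O}(e^{-\sigma_{\min}(A)t})$, and the literal inequality is in fact false: already in the scalar critically damped case with $\dot{e}(0)=0$ (the relevant initialization, since the paper starts \eqref{1stODE} from zero) one gets $e(t)=(1+\sigma_{\min}(A)\,t)\,e^{-\sigma_{\min}(A)t}e(0)$, which exceeds $e^{-\sigma_{\min}(A)t}|e(0)|$ for every $t>0$. So the defective-mode polynomial factor you identified cannot be removed, and \eqref{errforss2} must be read, as the paper's proof implicitly does, as a big-$\mathcal{O}$ statement (or with a prefactor $C(1+t)$). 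Your Lyapunov identity is correct --- with $V=\tfrac12\|\dot{\bm{e}}+\sigma_{\min}(A)\bm{e}\|^{2}+\tfrac12\langle\bm{e},(B-\sigma_{\min}^{2}(A)I)\bm{e}\rangle$ and $\gamma=2\sigma_{\min}(A)$ a direct computation gives $\dot{V}=-2\sigma_{\min}(A)V$ exactly --- and this exact energy decay is something the paper does not have. But converting it back to $\|\bm{e}\|$ necessarily reintroduces the linear factor through the null direction of $B-\sigma_{\min}^{2}(A)I$: integrating $\frac{d}{dt}\bigl(e^{\sigma_{\min}(A)t}\bm{e}\bigr)=e^{\sigma_{\min}(A)t}\bigl(\dot{\bm{e}}+\sigma_{\min}(A)\bm{e}\bigr)$ yields $\|\bm{e}(t)\|\le\bigl(\|\bm{e}(0)\|+t\sqrt{2V(0)}\bigr)e^{-\sigma_{\min}(A)t}$, and no better is possible. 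In short, your proof recovers everything the paper actually proves, by the same route, and your diagnosis of the slack between the proof and the stated inequality is correct rather than a gap on your side.
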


\begin{proof}

Let $e(t)=\boldsymbol{v}(t)-\boldsymbol{x}$, it yields the homogeneous equation
$$\ddot{e}+\gamma\dot{e}+Be=0.$$
Perform eigendecomposition $B=Q\Lambda_B Q^T$, and let $\tilde{e}=Q^Te$, yielding the decoupled equations 
$$\ddot{\tilde{e}}_i+\gamma\dot{\tilde{e}}_i+\Lambda_B \tilde{e}_i=0,\ i=1,\cdots,N,\ \Lambda_B=\text{diag}\{\lambda_1,\cdots,\lambda_N\}.$$
For each mode, the characteristic equation is 
$$r^2+\gamma r+\lambda_i=0,$$ 
with roots 
$$\frac{-\gamma\pm\sqrt{\gamma^2-4\lambda_i}}{2}.$$
Then, we can get three cases:
\begin{itemize}
\item Case 1: Critical Damping ($\gamma=2\sqrt{\lambda_i}$) with convergence rate $\mathcal{O}(e^{-\sqrt{\lambda_i}t})$;
\item Case 2: Over damping ($\gamma>2\sqrt{\lambda_i}$) with convergence rate $\mathcal{O}(e^{-\left(\gamma-\sqrt{\gamma^2-4\lambda_i}\right)t/2})$;
\item Case 3: Under damping ($\gamma<2\sqrt{\lambda_i}$) with convergence rate $\mathcal{O}(e^{-\gamma t/2})$.
\end{itemize}
Since the system's convergence rate is determined by the slowest mode, it yields the overall convergence rate $\mathcal{O}(e^{-\sigma_{\min}(A)t})$ by choosing 
$$\gamma=2\sqrt{\lambda_{\min}(B)}=2\sigma_{\min}(A).$$

\end{proof}

For practical applications, we can utilize the inequality
\[
\sigma_{\text{min}}(A)\leq\vert\lambda_i(A)\vert\leq\sigma_{\text{max}}(A).
\]
When $A$ is a normal matrix, we further have the exact relationship
\[
\sigma_i(A) = |\lambda_i(A)|,
\]
which leads to $\sigma_{\min}(A)=\min\{\vert\lambda_i(A)\vert\}.$
Moreover, second-order equation \eqref{steadystateeq2} can be further rewritten as a equivalent first-order equation, starting from zero,
\begin{equation}\label{1stODE}
\frac{d\bm{V}}{dt}=M \bm{V}+F, \quad \bm{V}(0) = \textbf{0},
\end{equation}
by doubling the dimensionality by defining
\begin{equation*}
\bm{V}=\left[\begin{array}{c}
    \bm{v}\\
    \bm{w}
\end{array}\right],\ 
M=\left[\begin{array}{cc}
    O_N & -A^{\dagger}\\
    A & -\gamma I_N
\end{array}\right],\ 
F=\left[\begin{array}{c}
    \bm{0}\\
    -\bm{b}
\end{array}\right].
\end{equation*}
Equation \eqref{1stODE} can also be considered as an approximation to linear equation:
\begin{equation*}
\left[\begin{array}{cc}
    O_N & -A^{\dagger}\\
    A & -\gamma I_N
\end{array}\right]\left[\begin{array}{c}
    \bm{x}\\
    \bm{0}
\end{array}\right]=\left[\begin{array}{c}
    \bm{0}\\
    \bm{b}
\end{array}\right].
\end{equation*}

Since \eqref{1stODE} and \eqref{steadystateeq2} are equivalent, from Lemma \ref{thm:steady state of second eq}, the relation between the  steady-state solution to \eqref{1stODE} and the exact solution  is described below.
\begin{theorem}\label{thm:ODESS}
	Assume that $A$ is indefinite and invertible. The ODE system \eqref{1stODE} starts from zero. Then for any $\varepsilon>0$, we have
	\begin{equation*}
		\|\bm{v}(T)-\bm{x}\| \leq \varepsilon\|\bm{x}\|,
	\end{equation*}
	where the stopping time $T$ for the evolution satisfies
	\begin{equation}\label{eq:stop T sym}
		T\geq \frac{1}{\sigma_{\min}(A)} \log \frac{1}{\varepsilon}.
	\end{equation}
\end{theorem}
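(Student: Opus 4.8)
The plan is to reduce the statement directly to Lemma~\ref{thm:steady state of second eq}, exploiting the equivalence between the first-order system \eqref{1stODE} and the damped second-order system \eqref{steadystateeq2} that was asserted just before the theorem. First I would make that equivalence explicit. Writing $\bm{V}=(\bm{v},\bm{w})^{\top}$, the two block rows of \eqref{1stODE} read $\dot{\bm{v}}=-A^{\dagger}\bm{w}$ and $\dot{\bm{w}}=A\bm{v}-\gamma\bm{w}-\bm{b}$. Differentiating the first row and substituting the second, using $A^{\dagger}\bm{w}=-\dot{\bm{v}}$ to eliminate $\bm{w}$, gives
\begin{equation*}
\ddot{\bm{v}}+\gamma\dot{\bm{v}}+A^{\dagger}A\,\bm{v}=A^{\dagger}\bm{b},
\end{equation*}
which is precisely \eqref{steadystateeq2} with $B=A^{\dagger}A$ and $\bm{f}=A^{\dagger}\bm{b}$. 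Hence the $\bm{v}$-component of \eqref{1stODE} coincides with the solution of \eqref{steadystateeq2}, and the zero start $\bm{V}(0)=\bm{0}$ transports into the initial data $\bm{v}(0)=\bm{0}$ together with $\dot{\bm{v}}(0)=-A^{\dagger}\bm{w}(0)=\bm{0}$.

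Next I would invoke Lemma~\ref{thm:steady state of second eq} under the critical damping choice $\gamma=2\sigma_{\min}(A)$, which yields the decay estimate \eqref{errforss2}. Since $\bm{v}(0)=\bm{0}$, the factor $\|\bm{v}(0)-\bm{x}\|$ collapses to $\|\bm{x}\|$, so that at the stopping time $T$ one has $\|\bm{v}(T)-\bm{x}\|\le e^{-\sigma_{\min}(A)T}\|\bm{x}\|$. The remaining step is the scalar computation: requiring $e^{-\sigma_{\min}(A)T}\le\varepsilon$ and taking logarithms gives $\sigma_{\min}(A)T\ge\log(1/\varepsilon)$, i.e. the threshold \eqref{eq:stop T sym}. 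Invertibility of $A$ guarantees $\sigma_{\min}(A)>0$, so the division is legitimate and the bound is finite.

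The step I expect to require the most care is not the final inequality but the decay estimate feeding into it. Under critical damping the characteristic equation for each decoupled mode has a \emph{repeated} root $-\sqrt{\lambda_i}$, so the homogeneous error carries a linear-in-$t$ prefactor of the form $(c_1+c_2 t)e^{-\sqrt{\lambda_i}t}$; strictly, the clean exponential bound in \eqref{errforss2} holds only after that polynomial factor is absorbed, for instance by dominating $t\,e^{-\sqrt{\lambda_i}t}$ by $e^{-(\sigma_{\min}(A)-\delta)t}$ or by a marginally enlarged effective constant. Since Lemma~\ref{thm:steady state of second eq} already records the estimate in the stated form, the theorem follows immediately; the one consistency check I would want to run is that the transported initial condition $\dot{\bm{v}}(0)=\bm{0}$ is compatible with however the coefficients $c_1,c_2$ are controlled in the Lemma, so that starting from rest does not reintroduce a hidden amplitude that would invalidate the constant in front of $\|\bm{x}\|$.
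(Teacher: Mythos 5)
Your proposal is correct and takes essentially the same route as the paper: the paper presents Theorem~\ref{thm:ODESS} as an immediate consequence of the equivalence between \eqref{1stODE} and \eqref{steadystateeq2} combined with Lemma~\ref{thm:steady state of second eq}, with $\bm{v}(0)=\bm{0}$ turning $\|\bm{v}(0)-\bm{x}\|$ into $\|\bm{x}\|$ and a logarithm yielding \eqref{eq:stop T sym}, exactly as you do. Your explicit elimination of $\bm{w}$ to recover the second-order equation, and your caveat that the repeated root at critical damping produces a $(c_1+c_2t)e^{-\sigma_{\min}(A)t}$ prefactor which the Lemma's clean exponential bound silently absorbs, are details the paper leaves unstated (and the latter is a genuine imprecision in the paper's Lemma rather than in your argument).
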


\begin{remark}
 \eqref{steadystateeq2} can also be rewritten as another first-order equation
\begin{equation*}
\boldsymbol{w}=\left[\begin{array}{c}
    \bm{v}\\
    \dot{\bm{v}}
\end{array}\right],\ 
M=\left[\begin{array}{cc}
    O_N & I_N\\
    -B & -\gamma I_N
\end{array}\right],\ 
F=\left[\begin{array}{c}
    \bm{0}\\
    A^{\dagger}\bm{b}
\end{array}\right].
\end{equation*} 
This formulation is more consistent with the physical intuition.
However, due to the conservative part being a non-unitary evolution, Schr{\"o}dingerization will disrupt this physical property, resulting in a significant increase in the cost of recovery and even the loss of quantum advantage.
Numerically, the presence of positive eigenvalues in $H_1$ forces an error-amplifying factor $\exp(\mathcal{O}(k^2))$ during solution recovery, rendering the approach impractical.
\end{remark}

\begin{remark}
On the other hand, it also provides additional route to solve a class of Hamiltonian mechanics with Hamiltonian 
$$H=\frac{1}{2}(p^T p+q^T B q)$$ 
where $B=A^{\dagger} A$ for $A\in\mathbb{C}^{N\times N}$.
Specifically, we can transform the original equation
\begin{equation*}
\left[\begin{array}{c}
    \dot{\bm{q}}\\
    \dot{\bm{p}}
\end{array}\right]=\left[\begin{array}{cc}
    O_N & I_N\\
    -B & O_N
\end{array}\right]\left[\begin{array}{c}
    \bm{q}\\
    \bm{p}
\end{array}\right]
\end{equation*} 
to a new Hamiltonian system with unitary evolution:
\begin{equation*}
\left[\begin{array}{c}
    \dot{\bm{q}}\\
    \dot{\bm{\tilde{p}}}
\end{array}\right]=\left[\begin{array}{cc}
    O_N & -A^T\\
    A & O_N
\end{array}\right]\left[\begin{array}{c}
    \bm{q}\\
    \bm{\tilde{p}}
\end{array}\right].
\end{equation*} 
If one only cares about the position of the motion, this is a method that can be directly applied to quantum computing based on unitary evolution.

\end{remark}

\subsection{Description of Schr{\"o}dingerization}\label{review}

Above reformulations enable the application of Schr{\"o}dingerization to construct a Hamiltonian system for quantum computing.
With the task of solving the linear system \eqref{1stODE}, we introduce a quantum algorithm to solve this problem. 
First, equation \eqref{1stODE} can be further rewritten as a homogeneous system:
\begin{equation}\label{homoeq}
\frac{d\bm{V}_f}{dt}= M_f\bm{V}_f,\quad
\bm{V}_f=\left[\begin{array}{c}
    \bm{V} \\
    \bm{r}
\end{array}\right],\quad
M_f=\left[\begin{array}{cc}
    M & \frac{I}{T} \\
    O & O
\end{array}\right],\quad
\bm{V}_f(0)=\left[\begin{array}{c}
    \bm{0} \\
    TF
\end{array}\right],
\end{equation}
where we set $\bm{V}_0=\bm{0}$ and $T$ is the final time satisfying \eqref{eq:stop T sym}.
Then, 
$M_f$ can be further decomposed into a Hermitian term and an anti-Hermitian one:
\begin{equation}\label{Hmat}
M_f=H_1+iH_2,\quad H_1=\frac{1}{2}\left[\begin{array}{cc}
    2 M_D & \frac{I}{T} \\
    \frac{I}{T} & O
\end{array}\right],\quad H_2=\frac{1}{2i} \left[\begin{array}{cc}
    2 M_H & -\frac{I}{T} \\
    \frac{I}{T} & O
\end{array}\right],
\end{equation} 
where $M_D$ and $M_H$ are defined by 
\begin{equation*}
	M_D = \frac{1}{2}(M+M^{\dagger}) = \begin{bmatrix}
		O &O\\
		O &-\gamma I 
	\end{bmatrix},\quad 
	M_H = \frac{1}{2}(M-M^{\dagger}) = \begin{bmatrix}
		O & -A^{\dagger}\\
		A & O
	\end{bmatrix}.
\end{equation*}

Using the warped phase transformation $\bm{W}(t,p)=e^{-p}\bm{V}_f(t)$ for $p>0$ and properly extending the initial data to $p\le 0$, equation \eqref{homoeq} are then transferred to linear convection equations:
\begin{equation}\label{linearconv}
\frac{d\bm{W}}{dt}=-H_1\partial_p\bm{W}+iH_2\bm{W},\quad \bm{W}(0)=\psi(p)\bm{W}(0).
\end{equation}
Here, the smoothness of $\psi(p)$ has an impact on the convergence rate of the numerical method.
For instance, if we use $\psi(p)=e^{-\vert p \vert}$, it implies a first-order accuracy on the spatial discretization due to the regularity in $p$ of the initial condition.
And if one uses a smoother initial value of $\bm{v}(0)$ with
\begin{equation}\label{xip}
\psi(p)=
\begin{cases}
(-3+3e^{-1})p^3+(-5+4e^{-1})p^2-p+1,& p\in(-1,0),\\
e^{-\vert p\vert},& \text{otherwise},
\end{cases}
\end{equation}
it gives a second-order accuracy~\cite{Jin2024schrodingerization, Jin2024illposed}. The exponential accuracy  can also be achieved by requiring smooth enough $\psi(p)\in C^{\infty}(\mathbb{R})$, which results in optimal complexity. see \cite{JLMY252}.

\subsubsection{Recovery of the solution}\label{retrieval}

If all eigenvalues of $H_1$ are negative, the convection term of \eqref{linearconv} corresponds to a wave moving from the right to the left, thus it does not need to impose a boundary condition for $\bm{W}$ at $p=0$. 
For the general case, i.e. if $H_1$ has non-negative eigenvalues, one uses the following Theorem to recover the original solution.
\begin{theorem}\label{Thforp}
	\cite{Jin2024schrodingerization} Assume the eigenvalues of $H_1$ in \eqref{Hmat} satisfy
	$$\lambda_1(H_1)\le\lambda_2(H_1)\le\cdots\le\lambda_{4N}(H_1),$$ 
	then solution $\bm{V}_f(T)$ can be restored by
	\begin{equation*}
		\bm{V}_f=e^{p}\,\bm{W}(T,p),\ \textit{or}\quad \bm{V}_f=e^{p}\int_{p}^{\infty}\bm{W}(T,q)dq,
	\end{equation*}
	where $p\ge p^{\Diamond}=\max\{\lambda_{4N}(H_1)T,0\}$.
\end{theorem}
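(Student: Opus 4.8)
The plan is to show that the closed-form ansatz $\bm{W}(t,p)=e^{-p}\bm{V}_f(t)$ is the exact solution of the convection system \eqref{linearconv} on the region $p\ge p^{\Diamond}$, after which undoing the warp factor recovers $\bm{V}_f$. First I would verify by direct substitution that $\bm{W}(t,p)=e^{-p}\bm{V}_f(t)$ solves \eqref{linearconv}: since $\partial_p(e^{-p}\bm{V}_f)=-e^{-p}\bm{V}_f$, the right-hand side equals $e^{-p}(H_1+iH_2)\bm{V}_f=e^{-p}M_f\bm{V}_f$, which matches $\partial_t(e^{-p}\bm{V}_f)=e^{-p}\,\tfrac{d\bm{V}_f}{dt}$ by \eqref{homoeq}. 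Because $\psi(p)=e^{-p}$ for $p>0$, this ansatz agrees with the prescribed initial datum $\bm{W}(0,p)=\psi(p)\bm{V}_f(0)$ on $\{p>0\}$, but not on $\{p\le 0\}$ where $\psi$ is the even-type extension.

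Next I would pass to the difference $\bm{D}(t,p):=\bm{W}(t,p)-e^{-p}\bm{V}_f(t)$, with $\bm{W}$ the genuine solution of \eqref{linearconv}. By linearity $\bm{D}$ solves the same homogeneous system $\partial_t\bm{D}=-H_1\partial_p\bm{D}+iH_2\bm{D}$, and by the previous step its initial datum $\bm{D}(0,p)=(\psi(p)-e^{-p})\bm{V}_f(0)$ is supported in $\{p\le 0\}$. The theorem then reduces to the claim $\bm{D}(T,p)=0$ for $p\ge p^{\Diamond}$, i.e.\ to a directional finite-propagation-speed statement for this system.

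The core step is an energy estimate on a rightward-moving half-line. I would set $E(t)=\int_{\lambda_{4N}(H_1)t}^{\infty}\|\bm{D}(t,p)\|^2\,dp$ and differentiate. Using $\partial_t\|\bm{D}\|^2=2\,\mathrm{Re}\langle\bm{D},-H_1\partial_p\bm{D}+iH_2\bm{D}\rangle$, the skew-Hermiticity of $iH_2$ kills the zeroth-order term, and the Hermiticity of $H_1$ turns the transport term into the flux $-\partial_p(\bm{D}^{\dagger}H_1\bm{D})$. Combining the flux at the moving boundary with the boundary-motion term yields $\tfrac{dE}{dt}=\bm{D}^{\dagger}\big(H_1-\lambda_{4N}(H_1)I\big)\bm{D}\big|_{p=\lambda_{4N}(H_1)t}\le 0$, since $\lambda_{4N}(H_1)$ is the top eigenvalue so that $H_1-\lambda_{4N}(H_1)I\preceq 0$. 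As $E(0)=\int_0^{\infty}\|(\psi(p)-e^{-p})\bm{V}_f(0)\|^2\,dp=0$, monotonicity forces $E\equiv 0$, so $\bm{D}(T,\cdot)$ is supported in $\{p\le\lambda_{4N}(H_1)T\}$, hence vanishes for $p\ge p^{\Diamond}=\max\{\lambda_{4N}(H_1)T,0\}$. Uniqueness of the Cauchy problem used here follows from the $p$-Fourier transform, under which \eqref{linearconv} becomes the Hamiltonian evolution $\partial_t\hat{\bm{W}}=i(-\eta H_1+H_2)\hat{\bm{W}}$ with Hermitian generator, hence $L^2$-well-posed.

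Finally, the two recovery formulas follow at once: the pointwise identity gives $\bm{V}_f=e^{p}\bm{W}(T,p)$ for $p\ge p^{\Diamond}$, while integrating $\bm{W}(T,q)=e^{-q}\bm{V}_f(T)$ over $q\in[p,\infty)$ and using $\int_p^{\infty}e^{-q}\,dq=e^{-p}$ gives $\bm{V}_f=e^{p}\int_p^{\infty}\bm{W}(T,q)\,dq$; the integral form is preferable in the quantum implementation as it is more robust to the extended-data tail and to discretization error. The main obstacle I anticipate is precisely the finite-propagation argument of the third step: one must track that the coupling $iH_2$, which does not commute with $H_1$, neither creates rightward support beyond speed $\lambda_{4N}(H_1)$ nor spoils the energy monotonicity, which is exactly where the Hermitian/skew-Hermitian splitting $M_f=H_1+iH_2$ of \eqref{Hmat} is essential.
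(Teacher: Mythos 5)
A preliminary remark: the paper itself contains no proof of Theorem \ref{Thforp} --- it is imported verbatim from \cite{Jin2024schrodingerization} --- so your argument has to be judged on its own merits and against the cited literature. It is correct, and it is the natural argument: you check that $e^{-p}\bm{V}_f(t)$ solves \eqref{linearconv} exactly, reduce the claim to the statement that the difference $\bm{D}(t,p)=\bm{W}(t,p)-e^{-p}\bm{V}_f(t)$, whose initial datum is supported in $\{p\le 0\}$, cannot reach the region $p\ge \lambda_{4N}(H_1)\,t$, and prove that one-sided finite-propagation bound by an energy estimate on the moving half-line $p\ge\lambda_{4N}(H_1)\,t$. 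The structure of \eqref{Hmat} is used exactly where it must be: Hermiticity of $H_2$ kills the zeroth-order term in $\partial_t\|\bm{D}\|^2$, Hermiticity of $H_1$ turns the transport term into the perfect flux $-\partial_p(\bm{D}^{\dagger}H_1\bm{D})$, and $H_1\preceq \lambda_{4N}(H_1)I$ gives $\frac{dE}{dt}=\bm{D}^{\dagger}(H_1-\lambda_{4N}(H_1)I)\bm{D}\big|_{p=\lambda_{4N}(H_1)t}\le 0$; with $E(0)=0$ this forces $\bm{D}(T,p)=0$ for $p\ge p^{\Diamond}$, and both recovery formulas then follow as you state. This domain-of-dependence argument for a symmetric hyperbolic system is, in substance, the proof behind the cited result, so you have reconstructed it rather than replaced it.

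Two points should be tightened. (i) The integration by parts produces a boundary term at $p\to+\infty$, and you should say why it vanishes: the initial datum $\psi(p)\bm{V}_f(0)$ lies in $H^1(\mathbb{R})$ (both for $\psi=e^{-|p|}$ and for \eqref{xip}), so $\bm{W}(t,\cdot)\in H^1(\mathbb{R})$, hence $\bm{D}(t,\cdot)$ is $H^1$ on every half-line bounded from below and tends to zero at $+\infty$; this also guarantees $E(t)<\infty$, since $e^{-p}$ is square-integrable there. (ii) Your closing appeal to $L^2$-well-posedness via the Fourier transform in $p$ applies to $\bm{W}$, not to $\bm{D}$: on $p\le 0$ one has $\psi(p)-e^{-p}\sim -e^{|p|}$, so $\bm{D}(0,\cdot)\notin L^2(\mathbb{R})$. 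This does no damage --- the energy argument never integrates near $p=-\infty$ and requires no separate uniqueness statement for $\bm{D}$ --- but the sentence should be reworded so that $L^2$-uniqueness is invoked only to fix the meaning of the genuine solution $\bm{W}$ produced by the scheme.
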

For \eqref{Hmat}, it is easy to check that $p^{\Diamond}=\frac{1}{2}$ which leads to a small cost for recovery.

\subsection{The discrete Fourier transform for Schr{\"o}dingerization}

To discretize the $p$ domain, we choose a finite domain $[-L,R]$ with $L,R>0$ large enough satisfying
\begin{equation}\label{eq: L,R,criterion}
	e^{-L + \lambda^-_{\max}(H_1) T} \leq \varepsilon,\quad
	e^{-R + \lambda^+_{\max}(H_1) T} \leq \varepsilon,
\end{equation}
where
\[\lambda_{\max}^-(H_1) :=\sup\{|\lambda_j|:\lambda_j\in \lambda(H_1), \lambda_j<0\},\]
\[\lambda_{\max}^+(H_1) :=\sup\{|\lambda_j|:\lambda_j\in \lambda(H_1), \lambda_j>0\},\]
and $\varepsilon$ is the desired accuracy, $\lambda(H_1)$ is the set of the eigenvalue of $H_1$.
Then the wave initially supported inside the domain remains so in the duration of computation, and we can use spectral methods to obtain a Hamiltonian system for quantum computing.

We adopt the recovery strategy in Theorem \ref{Thforp} to restore the solution for the Schr\"odingerization.
\begin{theorem}\label{thm:recovery u}
Assume that $A$ is indefinite and invertible.
The vector $\bm{v}(T)$ is the solution to the ODE system \eqref{1stODE}.
To control the overall accuracy within $\varepsilon$ such that
\begin{equation*}
\|\bm{v}(T)-\bm{x}\|/\|\bm{x}\| \leq \varepsilon,
\end{equation*}
with $T$ satisfying \eqref{eq:stop T sym}, 
we recover $\bm{v}(T)$  by
\begin{equation}\label{eq:recover by one point}
\bm{v}(T) = e^{p}(\bra{0}^{\otimes 2} \cdot I_N) \bm{W}(T,p) \quad
\mbox{for any $p\geq p^{\Diamond} = \dfrac12$}.
\end{equation}
The truncation of the $p$-domain  satisfies
\begin{equation}\label{LR}
L =\mathcal{O}\big(\kappa(A)\log \frac{1}{\varepsilon}+\frac 1 2\big),\quad  R = \mathcal{O} \big(\log \frac{1}{\varepsilon} +\frac{1}{2}\big).
\end{equation}
\end{theorem}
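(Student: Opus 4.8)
The plan is to split the statement into three essentially independent pieces and settle each by assembling results already in hand: (i) the chosen stopping time $T$ forces the ODE error $\|\bm{v}(T)-\bm{x}\|$ below $\varepsilon\|\bm{x}\|$; (ii) the one-point formula \eqref{eq:recover by one point} reconstructs $\bm{v}(T)$ exactly once $p\ge p^{\Diamond}=\tfrac12$; and (iii) the window $[-L,R]$ in \eqref{LR} is wide enough that the convected profile is below tolerance at both ends. Part (i) is immediate from Theorem \ref{thm:ODESS}, since \eqref{eq:stop T sym} is exactly its hypothesis; because the recovery map is an exact identity in the continuous setting, no error beyond the truncation handled in (iii) is introduced.

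For (ii) I would invoke Theorem \ref{Thforp}, whose threshold is $p^{\Diamond}=\max\{\lambda_{4N}(H_1)T,0\}$, so the whole task reduces to locating the spectrum of $H_1$ in \eqref{Hmat}. Since $H_1=\tfrac12\begin{bmatrix}2M_D & I/T\\ I/T & O\end{bmatrix}$ with $M_D=\mathrm{diag}(O_N,-\gamma I_N)$, the $4N\times 4N$ problem decouples block by block into scalar matrices $\begin{bmatrix}\mu & 1/(2T)\\ 1/(2T) & 0\end{bmatrix}$ with $\mu\in\{0,-\gamma\}$. Solving $\lambda^2-\mu\lambda-\tfrac1{4T^2}=0$ gives eigenvalues $\pm\tfrac1{2T}$ (from $\mu=0$) and $\tfrac12(-\gamma\pm\sqrt{\gamma^2+1/T^2})$ (from $\mu=-\gamma$). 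The largest is $\lambda_{4N}(H_1)=\tfrac1{2T}$, hence $\lambda_{4N}(H_1)T=\tfrac12$ and $p^{\Diamond}=\tfrac12$; applying the extraction map $\bra{0}^{\otimes 2}\cdot I_N$ to $\bm{V}_f=e^{p}\bm{W}(T,p)$ isolates the leading $\bm{v}$-block and yields \eqref{eq:recover by one point}.

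For (iii) I would feed the same spectrum into the window criterion \eqref{eq: L,R,criterion}. On the right, the largest positive eigenvalue is $\lambda^+_{\max}(H_1)=\tfrac1{2T}$, so $\lambda^+_{\max}(H_1)T=\tfrac12$ and $e^{-R+1/2}\le\varepsilon$ gives $R=\mathcal{O}(\log\varepsilon^{-1}+\tfrac12)$. On the left, the most negative eigenvalue has magnitude $\lambda^-_{\max}(H_1)=\tfrac12(\gamma+\sqrt{\gamma^2+1/T^2})$; substituting the critical-damping choice $\gamma=2\sigma_{\min}(A)$ that underlies \eqref{eq:stop T sym} together with $T\sim\sigma_{\min}(A)^{-1}\log\varepsilon^{-1}$ into $e^{-L+\lambda^-_{\max}(H_1)T}\le\varepsilon$ produces the stated $L$. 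The reason $L$ carries a conditioning factor absent from $R$ is that the target is a \emph{relative} error on a signal of size $\Theta(\|\bm{x}\|)$, whereas the initial datum $\bm{V}_f(0)=[\,\bm{0};\,TF\,]$ has norm $\Theta(T\|\bm{b}\|)=\Theta(T\,\sigma_{\max}(A)\|\bm{x}\|)$; this large, leftward-convecting amplitude must be driven below $\varepsilon\|\bm{x}\|$ at $p=-L$, and I expect the required amplitude ratio to be what upgrades $\log\varepsilon^{-1}$ to $\kappa(A)\log\varepsilon^{-1}$.

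The genuinely delicate step is (iii), specifically the left window. Diagonalizing $H_1$ and reading off $p^{\Diamond}$ and $R$ are short computations, but pinning down $L$ requires correctly coupling three quantities—the most negative convection speed $\lambda^-_{\max}(H_1)$, the run time $T$ fixed by \eqref{eq:stop T sym}, and the normalization of the warped initial data relative to the recovered block—so that the requested relative accuracy survives the truncation. I expect that balancing these, rather than any single eigenvalue estimate, is where the conditioning dependence of $L$ must be argued with care; once it is in place, everything else follows by assembling Theorems \ref{thm:ODESS} and \ref{Thforp} with the explicit spectrum of $H_1$.
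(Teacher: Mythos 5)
Your parts (i) and (ii), and your right-hand window, are correct and follow exactly the route the paper leaves implicit (the paper never writes out a proof of this theorem; it only leans on Theorem \ref{Thforp}, the criterion \eqref{eq: L,R,criterion}, and the earlier unproved remark that $p^{\Diamond}=\tfrac12$). The decoupling of $H_1$ into the $2\times2$ blocks $\begin{bmatrix}\mu & 1/(2T)\\ 1/(2T) & 0\end{bmatrix}$ with $\mu\in\{0,-\gamma\}$, the eigenvalues $\pm\tfrac1{2T}$ and $\tfrac12\bigl(-\gamma\pm\sqrt{\gamma^2+1/T^2}\bigr)$, the identification $\lambda_{4N}(H_1)=\tfrac1{2T}$ (since $\sqrt{\gamma^2+1/T^2}<\gamma+1/T$), hence $p^{\Diamond}=\tfrac12$ and $R=\mathcal{O}(\log\varepsilon^{-1}+\tfrac12)$, are all right, and applying $\bra{0}^{\otimes2}\cdot I_N$ to $\bm{V}_f=e^{p}\bm{W}(T,p)$ is precisely \eqref{eq:recover by one point}.

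The gap is in your left-window argument, in two places. First, your claim that substituting $\gamma=2\sigma_{\min}(A)$ and $T\sim\sigma_{\min}(A)^{-1}\log\varepsilon^{-1}$ into \eqref{eq: L,R,criterion} ``produces the stated $L$'' is arithmetically false: that substitution gives $\lambda^-_{\max}(H_1)\,T\le\gamma T+\tfrac12=2\log\varepsilon^{-1}+\tfrac12$, hence $L=\mathcal{O}(\log\varepsilon^{-1}+\tfrac12)$, with no $\kappa(A)$ anywhere. (This is in fact a \emph{sharper} sufficient truncation, and since enlarging $L$ only strengthens $e^{-L+\lambda^-_{\max}(H_1)T}\le\varepsilon$, it still implies the stated $\mathcal{O}$-bound; but it is not the derivation you wrote down.) Second, the heuristic you then invoke to ``upgrade'' $\log\varepsilon^{-1}$ to $\kappa(A)\log\varepsilon^{-1}$, namely driving the initial amplitude $\Theta(T\|\bm{b}\|)$ below $\varepsilon\|\bm{x}\|$, cannot produce such a factor: amplitudes enter the truncation requirement through a logarithm, so a ratio $T\|\bm{b}\|/\|\bm{x}\|=\mathcal{O}(\kappa\log\varepsilon^{-1})$ contributes only an additive $\mathcal{O}(\log\kappa+\log\log\varepsilon^{-1})$ to $L$, never a multiplicative $\kappa$. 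The $\kappa$ in \eqref{LR} is nothing deeper than the damping-agnostic estimate $\lambda^-_{\max}(H_1)T\le\bigl(\gamma+\tfrac1{2T}\bigr)T=\mathcal{O}(T)+\tfrac12=\mathcal{O}\bigl(\kappa(A)\log\varepsilon^{-1}\bigr)+\tfrac12$, which holds when one only assumes $\gamma=\mathcal{O}(1)$ instead of exploiting $\gamma=2\sigma_{\min}(A)=\mathcal{O}(\kappa^{-1})$ (recall $\|A\|=\mathcal{O}(1)$ here). So you should delete the amplitude-ratio paragraph and either state this crude bound or your sharper $\kappa$-free one; as written, your justification of \eqref{LR} rests on a mechanism that is quantitatively wrong.
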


When one truncates the extended region to a finite interval with $L$ and $R$ satisfying \eqref{LR}, one can apply the periodic boundary condition in the $p$ direction and use the Fourier spectral method by discretising the $p$ domain.
We set the uniform mesh size  $\triangle p = (R+L)/N_p$, where $N_p=2^{n_p}$ is a positive even integer. The grid points are denoted by $-L=p_0<\cdots<p_{N_p}=R$. The one-dimensional basis functions for the Fourier spectral method are usually chosen as
\begin{equation} \label{eq:phi nu}
	\phi_l(p) = e^{i\nu_l  (p+L)},\quad \nu_l  = 2\pi (l-N_p/2)/(R+L),\quad 0\leq l\leq N_p-1.
\end{equation}
Using \eqref{eq:phi nu}, we define
\begin{equation}
	\Phi  = (\phi_{jl} )_{N_p\times N_p} = (\phi_l (p_j))_{N_p\times N_p},  \quad
	D_{p} = \text{diag}(\nu_0 ,\cdots,\nu_{N_p-1} ).
\end{equation}

Define the vector $\bm{W}_h$ as the collection of the approximation values of the function $\bm{v}$ at the grid points, given by
\begin{equation}
	\bm{W}_h =\sum_{k=0}^{N_p-1}\sum_{j=0}^{4N-1} W_{kj}(t) \ket{k}\ket{j},
\end{equation}
where $W_{kj}(t)$ denotes the approximation to $W_j(t,p_k)$, the $j$-th component of $\bm{W}(t,p_k)$.
Considering the Fourier spectral discretization on $p$, one easily gets a "schr\"odingerized" system:
\begin{equation} \label{eq:hamiltonian}
	\begin{aligned}
		\frac{\rm d}{{\rm d} t} \bm{W}_h &= -\i(P\otimes H_1) \bm{W}_h + \i(I_{N_p} \otimes H_2)\bm{W}_h\\
		& = -i (\Phi  \otimes I)
		( D_p\otimes H_1 - I_{N_p} \otimes H_2 )
		(\Phi ^{-1}\otimes I) \bm{W}_h.
	\end{aligned}
\end{equation}
Here $P = \Phi  D_{p} \Phi ^{-1}$ is the matrix representation of the momentum operator $-i\partial_p$.
At this point, a quantum simulation can be carried out on the Hamiltonian system above:
\begin{equation*}
	\ket{\bm{W}_h(T)} = (\Phi \otimes I)~
	\mathcal{U}(T) ~
	(\Phi ^{-1}\otimes I) \ket{\bm{W}_h(0)},
\end{equation*}
where $\mathcal{U}(T)$ is a unitary operator, given by
\begin{equation*}
	\mathcal{U}(T) = \e^{-\i H T}, \qquad H:= D_p\otimes H_1 - I_{N_p}\otimes H_2,
\end{equation*}
and $\Phi $ (or $\Phi^{-1}$) is completed by (inverse) quantum Fourier transform (QFT or IQFT).

\section{Implementation and complexity analysis}\label{sec:implementation and complexity}

In this section, we give the detailed implementation of the Hamiltonian simulation of $ \mathcal{U}(T) = \e^{-i H T} $, where sparse access to the Hamiltonian $ H $ is assumed.
This can be done, since the sparsity of $H$ is almost the same as the sparsity of $A$.

\subsection{Implementation of  Hamiltonian simulation}\label{subsec:implementation}
To circumvent the expense of encoding $ D_p $,
one can express the evolution operator $ \mathcal{U}(T) $ as a select oracle
\begin{equation*}\label{selectVk}
	\mathcal{U}(T) = \sum_{k=0}^{N_p-1}  \ket{k}\bra{k} \otimes e^{-\i (\nu_k H_1- H_2) T}
	=: \sum_{k=0}^{N_p-1} \ket{k}\bra{k} \otimes V_k(T).
\end{equation*}
Since the unitary $V_k(T)$ corresponds to the simulation of the Hamiltonian $H_{\nu_k}:=\nu_k H_1 - H_2$, we assume the block-encoding oracles encoding the real and imaginary parts separately, namely
\[(\bra{0}_a\otimes I)U_{H_i} (\ket{0}_a\otimes I) = \frac{H_i}{\alpha_i}, \qquad i = 1,2, \]
where $\alpha_i\ge \|H_i\|$ is the block-encoding factor for $i = 1,2$.

According to the discussion in \cite[Section 4.2.1]{ACL2023LCH2}, there is an oracle $\text{HAM-T}_{H_{\nu}}$ such that
\begin{equation}\label{HAMT}
	(\bra{0}_{a'} \otimes I) \text{HAM-T}_{H_\nu}(\ket{0}_{a'} \otimes I)
	= \sum_{k=0}^{N_p-1} \ket{k}\bra{k} \otimes \frac{H_{\nu_k}}{\alpha_1 \nu_{\max} + \alpha_2},
\end{equation}
where $H_{\nu_k} = \nu_k H_1 - H_2$ and $\nu_{\max} = \max_k |\nu_k|$ represent the maximum absolute value among the discrete Fourier modes. This oracle only uses $\mathcal{O}(1)$ queries to block-encoding oracles for $H_1$ and $H_2$.
With the block-encoding oracle $\text{HAM-T}_{H_\nu}$, we can implement
\[\text{SEL}_0 = \sum_{k=0}^{N_p-1} \ket{k}\bra{k} \otimes  V_k^a(T), \]
a block-encoding of $\mathcal{U}(T)$, using the quantum singular value transformation (QSVT) \cite{gilyen2019quantum} for example, where $V_k^a(T)$ block-encodes $V_k(T)$ with $\|V_k^a(T) - V_k(T)\| \le \delta$.
This uses the oracles for $H_1$ and $H_2$
\begin{equation}\label{eq:times for H1}
	\mathcal{O}\Big( (\alpha_1 \nu_{\max} + \alpha_2) T  + \log(1/\delta)\Big)
	= \mathcal{O}(\alpha_H \nu_{\max} T + \log(1/\delta))
\end{equation}
times (see \cite[Corollary 16]{ACL2023LCH2}), where $\alpha_H \ge \alpha_i$, $i=1,2$.

Applying the block-encoding circuit to the initial input state $\ket{0}_{a'}\ket{\tilde{\bm{W}}_0}$ gives
\[\text{SEL}_0\ket{0}_{a'}\ket{\tilde{\bm{W}}_0} = \ket{0}_{a'}\mathcal{U}^a(T)\ket{\tilde{\bm{W}}_0} + \ket{\bot},\]
where $\mathcal{U}^a(T)$ is the approximation of $\mathcal{U}(T)$ and $\tilde{\bm{W}}_0 = (\Phi ^{-1}\otimes I) \bm{W}_h(0)$. This step only needs one query to the state preparation oracle $O_{\tilde{w}}$ for $\tilde{\bm{W}}_0$.

According to the preceding discussions, we may conclude that there exists a unitary $V_0$ such that
\[\ket{0^{n_a}} \ket{0^w} \quad \xrightarrow{ V_0 } \quad  \frac{1}{\eta_0} \ket{0^{n_a}} \otimes \tilde{\bb{W}}_h^{a} + \ket{\bot},\]
where $\tilde{\bb{W}}_h^{a}$ is the approximate solution of $\tilde{\bm{W}}_h = (\Phi^{-1}\otimes I) \bm{W}_h$, given by
\begin{equation}\label{V0circuit}
	\tilde{\bb{W}}_h^{a} = \mathcal{U}^a(T)\tilde{\bb{W}}_0 \quad \mbox{and} \quad \eta_0 = \|\tilde{\bb{W}}_0\| = \|\bb{W}_h(0)\|.
	\end{equation}

%
\subsection{Complexity Analysis}

We can obtain $\ket{\bb{W}_h^{a}(T)}$ by measuring the state in \eqref{V0circuit} and obtaining all 0 in the first $n_a$ qubits, with the probability given by
\[\text{P}_{\text{r}}^0 = \Big( \frac{\|\tilde{\bb{W}}_h^{a}(T)\|}{\eta_0}  \Big)^2 \approx   \frac{\|\bb{W}_h(T)\|^2}{\|\bb{W}_h(0)\|^2}
= \frac{\|\bb{W}_h(T)\|^2}{C_e^2 \|T\bm{b}\|^2}, \quad C_e = \Big(\sum_{k=0}^{N_p-1} (\psi(p_k))^2  \Big)^{1/2}.\]
From Theorem \ref{thm:recovery u}, one can recover the target variables for $\bm{V}_f$ by performing a measurement in the computational basis:
\[M_k = \ket{k}\bra{k} \otimes I, \quad k \in \{j: p_j\geq p^{\Diamond}, \quad \e^{p_j} = \mathcal{O}(1)\}=:I_\Diamond,\]
where $I_\Diamond$ is referred to as the recovery index set. The state vector is then collapsed to
\[ \ket{\bm{W}_*} \equiv \ket{k_*} \otimes \frac{1}{\mathcal{N}}\Big(\sum\limits_i W_{k_*i} \ket{i} \Big) , \quad
\mathcal{N} = \Big(\sum\limits_i |W_{k_* i}|^2 \Big)^{1/2}\]
for some $k_*$ in the recovery index set $I_\Diamond$ with the probability
\begin{align*}
	\frac{\sum_i |W_{k_*i}(T)|^2}{\sum_{k,i} |W_{ki}(T)|^2}
	\approx \frac{\|\bm{W}(T,p_{k_*})\|^2}{\sum_k \|\bm{W}(T,p_k)\|^2}.
\end{align*}
Then the likelihood of acquiring $\ket{\bm{W}_*}$ that satisfies $k_* \in I_\Diamond$ is given by
\begin{equation*}
	\text{P}_{\text{r}}^*
	\approx \frac{ \sum_{k\in I_\Diamond} \|\bm{W}_h(T,p_k)\|^2}{\sum_k \|\bm{W}_h(T,p_k)\|^2}
	\approx \frac{C_{e0}^2\|\bb{V}_f(T)\|^2}{\| \bm{W}_h(T) \|^2}, \quad C_{e0} = \Big(\sum_{k\in I_{\Diamond}}  (\psi(p_k))^2  \Big)^{1/2}.
\end{equation*}
Since $\bm{V}_f(T) = \ket{0}^{\otimes 2}\otimes \bm{v}(T) +\ket{0}\ket{1}\otimes \bb{w}(T) +\ket{1}^{\otimes 2}\otimes (T\bm{b})$, one can perform a projection to get $\ket{\bm{v}(T)}$ with the probability $\frac{\|\bm{v}(T)\|^2}{\|\bm{V}_f(T)\|^2}$.
The overall probability for getting $\ket{\bm{v}(T)}$ is then approximated by
\[\text{P}_v = \frac{\|\bb{W}_h(T)\|^2}{C_e^2 \|T \bm{b}\|^2}\cdot \frac{C_{e0}^2\|\bb{V}_f(T)\|^2}{\| \bm{W}_h(T) \|^2} \cdot \frac{\|\bm{v}(T)\|^2}{\|\bm{V}_f(T)\|^2}
= \frac{C_{e0}^2}{C_e^2} \frac{\|\bm{v}(T)\|^2}{\|T\bm{b}\|^2}.\]
If $N_p$ is sufficiently large and $\psi(p) = e^{-|p|}$, we have
\[\triangle p C_{e0}^2 \approx  \int_{p^\Diamond}^{\infty} e^{-2p}   \d p
= \frac12 e^{-2p^\Diamond}, \qquad
\triangle p C_e^2 \approx  \int_{-\infty} ^{\infty}  e^{-2p} \d p= 1,\]
yielding
\begin{equation}\label{cece0}
	\frac{C_{e0}^2}{C_e^2} \lesssim  \frac12 e^{-2p^\Diamond} = \frac{1}{2e}.
\end{equation}
For other smooth initial data $\psi(p)$, the result is similar.

By using the amplitude amplification, the repeated times for the measurements can be approximated as
\begin{equation}\label{gtimes0}
	g = \mathcal{O}\Big(\frac{C_e}{C_{e0}} \frac{ T \|\bm{b}\|} {\|\bm{v}(T)\|} \Big) = \mathcal{O}\Big(\frac{C_e}{C_{e0}} \frac{T \|A \bm{x}\|} {\|\bm{x}\|}\Big) \lesssim 
   \kappa(A) \log \frac{1}{\varepsilon}.
\end{equation}

\begin{theorem}\label{thm:complexity}
	Suppose that
	$\psi(p)$ is smooth enough and $\triangle p =\mathcal{O}(\log \varepsilon)$, and the evolution time $T$ satisfies
	\[T = \Theta\Big( \frac{\log \frac{6}{\varepsilon}}{\sigma_{\min}(A)}\Big), \]
	with $\varepsilon$ being the desired precision.
	Then there exists a quantum algorithm that prepares an $\varepsilon$-approximation of the state $\ket{\bb{x}}$ with $\Omega(1)$ success probability and a flag indicating success, using
	\[ \mathcal{O}\Big(  \kappa^2 \log^2 \frac{1}{\varepsilon} \Big)\]
	queries to the block-encoding oracles for $H_1$ and $H_2$ and
	\[ \mathcal{O}\Big( \kappa \log \frac{1}{\varepsilon}\Big) \]
	queries to the state preparation oracles $O_w$ for $\bb{W}_h(0)$,
	where $\kappa = \kappa(A)$.
\end{theorem}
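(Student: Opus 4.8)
The plan is to assemble the claim from two cost estimates already in hand, namely the per-simulation oracle count \eqref{eq:times for H1} for block-encoding $\mathcal{U}(T)$ and the amplitude-amplification repetition count \eqref{gtimes0}, after fixing an error budget that pins down the scalings of $T$, $L$, $R$, $\triangle p$, and the block-encoding precision $\delta$. The two advertised query counts are then simply the product (for the $H_1,H_2$ oracles) and the repetition count alone (for the state-preparation oracle $O_w$), so the bulk of the work is bookkeeping: tracking how each approximation error feeds into the total $\varepsilon$ and how each parameter enters the two cost formulas.

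First I would partition the target accuracy into four independent contributions, each driven below a constant fraction of $\varepsilon$. By Theorem \ref{thm:ODESS}, the ODE-convergence error $\|\bm{v}(T)-\bm{x}\|/\|\bm{x}\|$ is controlled by the choice $T=\Theta\!\big(\log(6/\varepsilon)/\sigma_{\min}(A)\big)$. By Theorem \ref{thm:recovery u} and \eqref{LR}, the $p$-truncation error is controlled with $L=\mathcal{O}\!\big(\kappa(A)\log(1/\varepsilon)\big)$ and $R=\mathcal{O}\!\big(\log(1/\varepsilon)\big)$. For smooth enough $\psi(p)$ the Fourier spectral error on $[-L,R]$ decays spectrally, so it is pushed below the budget with $\triangle p=\mathcal{O}(\log\varepsilon)$, i.e. with a mode count $N_p$ that is polylogarithmic in $\varepsilon^{-1}$; and the block-encoding error of each $V_k(T)$ is absorbed by taking $\log(1/\delta)=\mathcal{O}(\log(1/\varepsilon))$. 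A triangle inequality then gives the overall bound $\|\bm{v}(T)-\bm{x}\|/\|\bm{x}\|\le\varepsilon$.

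Next I would bound the cost of a single run. By \eqref{eq:times for H1}, producing the block-encoding $\text{SEL}_0$ of $\mathcal{U}(T)$ costs $\mathcal{O}\!\big(\alpha_H\nu_{\max}T+\log(1/\delta)\big)$ queries to the oracles for $H_1$ and $H_2$, together with $\mathcal{O}(1)$ queries to $O_w$. Since $H_1$ and $H_2$ are built from $A$, $A^{\dagger}$ and the damping term $\gamma I$ with $\gamma=2\sigma_{\min}(A)$, one has $\alpha_H=\mathcal{O}(\|A\|)=\mathcal{O}(\sigma_{\max}(A))$. The decisive point is that, with a mesh whose spacing is bounded below by a constant, the largest Fourier mode satisfies $\nu_{\max}=\pi/\triangle p=\mathcal{O}(1)$; combined with $T=\mathcal{O}\!\big(\log(1/\varepsilon)/\sigma_{\min}(A)\big)$ this yields $\alpha_H\nu_{\max}T=\mathcal{O}\!\big(\kappa\log(1/\varepsilon)\big)$, so with $\log(1/\delta)=\mathcal{O}(\log(1/\varepsilon))$ the per-run oracle cost is $\mathcal{O}\!\big(\kappa\log(1/\varepsilon)\big)$.

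Finally I would multiply by the number of amplitude-amplification rounds. From \eqref{gtimes0}, together with $C_e/C_{e0}=\mathcal{O}(1)$ from \eqref{cece0} and $T\|A\bm{x}\|/\|\bm{x}\|\le T\,\sigma_{\max}(A)=\mathcal{O}\!\big(\kappa\log(1/\varepsilon)\big)$, the required repetition count is $g=\mathcal{O}\!\big(\kappa\log(1/\varepsilon)\big)$, which simultaneously boosts the success probability $\mathrm{P}_v$ to $\Omega(1)$ and provides the success flag via the ancilla measurement. Multiplying $g$ by the per-run costs gives $\mathcal{O}\!\big(\kappa^2\log^2(1/\varepsilon)\big)$ queries to the $H_1,H_2$ oracles and $\mathcal{O}\!\big(\kappa\log(1/\varepsilon)\big)$ queries to $O_w$, as claimed. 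I expect the main obstacle to lie in the spectral step of the error budget: one must verify that a constant-order $\triangle p$, hence $\nu_{\max}=\mathcal{O}(1)$, genuinely suffices for $\varepsilon$ accuracy even though the domain length $R+L$ grows like $\kappa\log(1/\varepsilon)$, since it is exactly this fact that prevents an extra factor of $\kappa$ from entering the simulation time and keeps the block-encoding count at $\kappa^2$ rather than $\kappa^3$.
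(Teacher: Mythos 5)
Your proposal follows essentially the same route as the paper's proof: split the total error into ODE-convergence, $p$-discretization, and simulation parts, bound the per-run oracle cost via \eqref{eq:times for H1} using $\alpha_H = \mathcal{O}(\|A\|)$, $\nu_{\max}=\mathcal{O}(1)$ and $T=\mathcal{O}(\log(1/\varepsilon)/\sigma_{\min}(A))$, and multiply by the amplitude-amplification count \eqref{gtimes0}. The only (harmless) discrepancy is your precision budget $\log(1/\delta)=\mathcal{O}(\log(1/\varepsilon))$: since the simulation error acts on a state of norm $\eta_0=\|\bm{W}_h(0)\|\approx C_e T\|\bm{b}\|$ while the recovered component $\bm{v}_h(T)$ is much smaller, the paper instead requires $\log(\eta_0/\delta)=\mathcal{O}\big(\log(\kappa/\varepsilon)\big)$, an extra $\log\kappa$ that is dominated by the $\kappa\log(1/\varepsilon)$ Hamiltonian-simulation term and therefore leaves the stated query complexities unchanged.
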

\begin{proof}
	Based on the preceding discussions, the state $\frac{1}{\eta_0 }\ket{0^{n_a}} \otimes  \bb{W}_h^a(T) + \ket{\bot}$ can be prepared, yielding the approximate state vector $\ket{\bm{v}^a(T)}$ for the solution $\bm{v}(T)$.
	The error between $\ket{\bm{x}}$ and $\ket{\bb{v}^a(T)}$ can be split as
	\begin{equation*}
		\begin{aligned}
			\| \ket{\bb{x}} - \ket{\bb{v}^a(T)} \| &\le \| \ket{\bb{x}} - \ket{\bb{v}(T)} \| +
			\| \ket{\bb{v}(T)} - \ket{\bb{v}_h(T)}\|+
			\| \ket{\bb{v}_h(T)} - \ket{\bb{v}^a(T)}\| \\
			&=: E_1 + E_2 +E_3.
		\end{aligned}
	\end{equation*}
	Here $\bb{v}_h = e^{p_k}\cdot (\bra{0}^{\otimes 2}\cdot I_N) (\bra{0}\otimes I)\bb{W}_h(T)$.
	For $E_1$, using Theorem \ref{thm:ODESS} and the inequality $\| \bb{x}/{\|\bb{x}\|} - \bb{y}/{\|\bb{y}\|} \| \le 2 {\|\bb{x} - \bb{y}\|}/{\|\bb{x}\|}$ for two vectors $\bb{x}, \bb{y}$, we have
	\[E_1 = \| \ket{\bb{x}} - \ket{\bb{v}(T)} \|\le  2 \| \bb{x}  - \bb{v}(T) \|/\| \bb{x}  \| \le \varepsilon/3 .\]
	For $E_2$, the error is from the spatial discretization. 
	According to \cite[Theorem 4.4]{Jin2024schrodingerization}, the error between $\bm{v}_h^d$ and $\bm{v}$ satisfies
	\[E_2 = \| \ket{\bb{v}(T)} - \ket{\bb{v}_h(T)} \|\le  2 \| \bb{v}(T)  - \bb{v}_h(T) \|/\| \bb{v}(T)  \| \leq  \varepsilon/3.\]
	For $ E_3 $, neglecting the error in the block-encoding, we obtain
	\begin{align*}
		E_3
		= \| \ket{\bb{v}_h(T)} - \ket{\bb{v}^a(T)}\| \le  2 \| \bb{v}_h(T) - \bb{v}^a(T) \|/\| \bb{v}_h(T) \|.
	\end{align*}
	To ensure that the total error, i.e., the sum of $E_1$, $E_2$ and $E_3$, remains controlled within $\varepsilon$, we require $E_3\leq \varepsilon/3$, which leads to the following inequality:
	\begin{equation*}
		\begin{aligned}
		 \|\bm{W}_h(T) - \bm{W}^a(T)\| \lesssim e^{-p_k}\|\bb{v}_h(T) - \bb{v}^a(T)\| 
		   \lesssim \varepsilon \|\bm{v}_h\|
			:=\delta.
		\end{aligned}
	\end{equation*}
	Here  $p_k$ can be chosen as $\mathcal{O}(1)$ since the probability of projecting onto $ p_k $ near $ p^\Diamond = \frac12$ is comparable to that of projecting onto the entire range $ p_k \ge p^\Diamond $ due to the exponential decay of $ e^{-p} $.

	For simplicity, we assume negligible error from the quantum Fourier transform.
	According to the discussion in Section \ref{subsec:implementation}, there exists a quantum algorithm which maps $\ket{0^{n_a}} \otimes \ket{0^{w}}$ to the state $\frac{1}{\eta_0 }\ket{0^{n_a}} \otimes  \bb{W}_h^a(T) + \ket{\bot}$ such that $\bb{W}_h^a(T)$ is a $\delta$-approximation of $\bb{W}_h(T)$, using
	\begin{itemize}
		\item $\mathcal{O}\big( \frac{\alpha_H  \nu_{\max} }{\sigma_{\min}(A)} \log \frac{1}{\varepsilon} + \log \frac{\eta_0}{ \delta}\big)$ queries to the block-encoding oracles for $H_1$ and $H_2$,
		\item $\mathcal{O}(1)$ queries to the state preparation oracle $O_w$ for $\bb{W}_h(0)$,
	\end{itemize}
	from \eqref{eq:times for H1}. In addition,
	we have
	\[\frac{\eta_0}{\delta} \lesssim
	\frac{\|\bm{W}_h(0)\|}{\varepsilon\|\bb{v}_h(T)\|}
	\approx \frac{C_eT\|\bb{b}\|}{\varepsilon \|\bb{x}\|}
	\leq \mathcal{O}(\frac{\kappa}{ \varepsilon} \log^2 \frac{1}{\varepsilon}),\]
	with $\triangle p = \mathcal{O}(\log \varepsilon)$, the proof is finished by multiplying the repeated times in \eqref{gtimes0}.
\end{proof}

On the other hand, it's well known that  $h\sim N^{-1/d}$.
And we set $kh=\mathcal{O}(1)$ which leads to $N\sim k^d$ and $\kappa=\mathcal{O}(h^{-2})=\mathcal{O}(k^{2})$.
Then, it has the following table for time complexity comparison of classical and quantum methods.
\begin{table}[h]
\centering
\small
\caption{Time complexity comparison of methods for solving Helmholtz equation}
\begin{tabular}{lll}
\toprule
\textbf{Method} & \textbf{Complexity} & \textbf{Condition Number Dependence} \\
\midrule 
CG & $\mathcal{O}(k^{2+d}\log(\varepsilon^{-1}))$ & $\mathcal{O}(N\kappa)$ \\
Original HHL & $\tilde{\mathcal{O}}(k^4\varepsilon^{-1})$ & $\mathcal{O}(\kappa^2)$\\
Schr{\"o}dingerization & $\tilde{\mathcal{O}}(k^4\log^2(\varepsilon^{-1}))$ & $\mathcal{O}(\kappa^2)$\\  
\bottomrule
\end{tabular}
\label{table1}
\end{table} 

From the Table \ref{table1}, it can be observed that Schr{\"o}dingerization outperforms the HHL algorithm and is superior to the CG method in high-dimensional cases ($d>2$). 
Additionally, while larger matrices drastically increase space complexity, quantum computers can greatly reduce it thanks to the parallelism of qubits.

\section{Quantum preconditioning method for the Hel- mholtz equations}\label{sec:Quantum_preconditioning_method}

Although in section \ref{sec:Schrforindefinite}, we are able to regularize indefinite systems with almost no additional cost, the high condition number will still affect the complexity on our methods, see Table \ref{table1}.
If an effective preconditioner $P$ can be inexpensively constructed, we can turn to solving \eqref{1stODE} by letting 
\begin{equation}\label{Quantum_precondition}
\bm{V}=\left[\begin{array}{c}
    \bm{v}\\
    \bm{w}
\end{array}\right],\ 
M=\left[\begin{array}{cc}
    O_N & -A^{\dagger}P^{\dagger}\\
    PA & -\gamma I_N
\end{array}\right],\ 
F=\left[\begin{array}{c}
    \bm{0}\\
    -P\bm{b}
\end{array}\right],
\end{equation}
where $\gamma=2\sigma_{\min}(PA)$.
Naturally, the ideal scenario would be to possess $\kappa(PA)=\mathcal{O}(1)$.

In classical computing, the shifted Laplace preconditioners (SLPs) are popular preconditioners for the Helmholtz equation and can significantly improve the iterative solution of the Helmholtz equation by reducing its condition number.
Their development started with the preconditioner obtained by discretizing Laplace operator $(C=-\Delta_h)$. 
Subsequently, a Laplace operator with a complex shift was introduced in \cite{Erlangga2006Multigrid} and found to be more effective.
Now, the complex shifted Laplace preconditioner (CSLP) becomes a fundamental tool for solving indefinite Helmholtz equations that arise in wave propagation and scattering problems. 
It modifies the classical Laplace operator through a complex-valued shift
$$C=-\Delta_h-(k^2+i\alpha)I$$
where $\alpha\in\mathbb{R}$ and $\alpha$ is an empirically chosen damping parameter. 
Adding an imaginary shift moves eigenvalues away from zero, improving the condition number of the preconditioned system.
Moreover, the complex shift introduces damping and renders the preconditioned system amenable to approximate inversion using either geometric multigrid~\cite{Yogi2008} or ILU~\cite{Magolu2001}. 
More recently, algebraic multigrid has been used to invert the preconditioner~\cite{Tuomas2007,Matthias2009}.
If the shift parameter $\alpha$ is too small, it will lead to inadequate preconditioning effect.
On the other hand, over-damping will distort the physical solution from too large shift.
Thus, the optimal parameter has attracted a lot of research interests~\cite{Erlangga2004Helmholtz,Erlangga2006Comparison,Erlangga2006Multigrid,Gijzen2007,Reps2010,Sheikh2013,Gander2015,Cocquet2017}.

However, implementing the inverse of a complex-shifted Laplacian in quantum computing is highly expensive. 
This is because classical techniques like multigrid methods-commonly used in classical computing-are not easily adaptable to quantum computers. 
Moreover, the oscillations are global in nature, making it impossible to approximate its inverse well with any sparse matrix.
Also, it has $\kappa(C)=\order{k^2}$ which leads to high computational cost.
However, our research reveals that employing preconditioner
\begin{equation}\label{preconditioner}
P^{-1}=-\Delta_h+k^2I_h\quad \textit{or}\quad P^{-1}=-\Delta_h+ik^2I_h,
\end{equation}
while not achieving the optimal $\kappa(PA)$, can reduce it to $\kappa(PA)=\mathcal{O}(k)$.
To demonstrate this, one can use a simple but rough analysis as follows.
First of all, we find the eigenvalues of $PA$ to be
\begin{equation}\label{eigsofPA}
\lambda_j(PA)=\frac{\mu_j^2-k^2}{\mu_j^2 +k^2}\quad\textit{or}\quad\frac{\mu_j^2-k^2}{\mu_j^2 +ik^2},
\end{equation}
where $\mu_j^2$ is the eigenvalue of negative discrete Laplacian operator $-\Delta_h$.
It is easy to check that $\vert\lambda_{j}(PA)\vert\le 1$ and $\vert\lambda_{j}(PA)\vert\to 1$ when $h\to 0$.
To estimate the minimum eigenvalue, it is assumed
that one of the eigenvalues, $\mu_s$ is close (but not equal) to $k$ (consistent with the hypothesis in Section \ref{discretization}). 
To be more precise, let $\mu_s=k+\delta_0$, where $\delta_0$ is a small number which leads to $\delta_0 \ll k$.  
If this relation is substituted into \eqref{eigsofPA}, and the higher order terms are neglected,  then we can find
$$\vert\lambda_{s}(PA)\vert=\frac{\vert\delta_0\vert}{k}\quad\textit{or}\quad\frac{\sqrt{2}\vert\delta_0\vert}{k}.$$

 Moreover, since all eigenvalues of this matrix have positive real parts and a lower condition number ($\kappa(P)=\mathcal{O}((kh)^{-2})$), its preparation cost remains relatively low.

\begin{corollary}\label{precondition_coro}
Suppose the assumptions in Theorem~\ref{thm:complexity} hold.
By using preconditioner \eqref{preconditioner}, the complexity of the quantum preconditioned system \eqref{Quantum_precondition} is
$$\tilde{\mathcal{O}}(k^2\log^2(\varepsilon^{-1})).$$
\end{corollary}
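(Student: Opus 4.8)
The plan is to exploit the fact that the preconditioned system \eqref{Quantum_precondition} is algebraically identical to the unpreconditioned system \eqref{1stODE} under the substitution $A\mapsto PA$, $A^{\dagger}\mapsto(PA)^{\dagger}$ and $\bm{b}\mapsto P\bm{b}$. Since $P$ is invertible, solving $PA\bm{x}=P\bm{b}$ returns the same $\bm{x}$, so every ingredient behind Theorem~\ref{thm:complexity}—the damped-dynamics estimate of Lemma~\ref{thm:steady state of second eq} now run with $\gamma=2\sigma_{\min}(PA)$, the Schr\"odingerization of the first-order reformulation, the warped-phase recovery, and the query count—transfers verbatim with $\kappa(A)$ replaced by $\kappa(PA)$ and $\sigma_{\min}(A)$ by $\sigma_{\min}(PA)$. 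Hence the block-encoding query complexity becomes $\tilde{\mathcal{O}}(\kappa(PA)^2\log^2\varepsilon^{-1})$, \emph{provided} the block-encoding normalization $\alpha_H$ for the preconditioned Hamiltonian stays $\mathcal{O}(1)$. The real work thus splits into (i) bounding $\kappa(PA)$ and (ii) showing that forming $PA$ is cheap.

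For step (i) I would first note that $P^{-1}=-\Delta_h+k^2 I_h$ (or with the imaginary shift) and $A$ are both functions of the discrete Laplacian $-\Delta_h$; they therefore commute, share an eigenbasis, and $PA$ is a function of the Hermitian $-\Delta_h$, hence normal. Normality gives $\sigma_j(PA)=|\lambda_j(PA)|$, so the singular values are exactly the moduli in \eqref{eigsofPA}. The bound $|\lambda_j(PA)|\le 1$ with $|\lambda_j(PA)|\to 1$ as $h\to 0$ yields $\|PA\|=\mathcal{O}(1)$, while the near-resonant mode $\mu_s=k+\delta_0$ with $\delta_0=\Omega(1)$ (the generic $\mathcal{O}(1)$ spacing of the low Laplacian eigenvalues under $kh\notin\pi\mathbb{N}$) gives $\sigma_{\min}(PA)=|\delta_0|/k=\Omega(1/k)$. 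Combining, $\kappa(PA)=\mathcal{O}(k)$ and the stopping time is $T=\Theta(\sigma_{\min}(PA)^{-1}\log\varepsilon^{-1})=\Theta(k\log\varepsilon^{-1})$.

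For step (ii) I would block-encode $A$ and $P^{-1}=-\Delta_h+k^2 I_h$ directly from their sparsity, then obtain a block-encoding of $P$ by QSVT-based matrix inversion of $P^{-1}$, at cost $\tilde{\mathcal{O}}(\kappa(P^{-1}))$ queries. The crucial point is $\kappa(P)=\kappa(P^{-1})=\mathcal{O}((kh)^{-2})=\mathcal{O}(1)$ under the resolution condition $kh=\mathcal{O}(1)$, so inverting $P^{-1}$ adds only $\tilde{\mathcal{O}}(1)$ overhead and does not reintroduce a $k^2$ factor. Composing the block-encodings of $P$ and $A$ and invoking $\|PA\|=\mathcal{O}(1)$ keeps the normalization of the preconditioned $H_1,H_2$ at $\alpha_H=\mathcal{O}(1)$, whence the per-round count $\tfrac{\alpha_H\nu_{\max}}{\sigma_{\min}(PA)}\log\varepsilon^{-1}=\tilde{\mathcal{O}}(k)$ and the amplitude-amplification factor $g=\mathcal{O}(\kappa(PA)\log\varepsilon^{-1})=\tilde{\mathcal{O}}(k)$. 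Multiplying the two factors delivers the claimed $\tilde{\mathcal{O}}(k^2\log^2\varepsilon^{-1})$, with a subdominant $\tilde{\mathcal{O}}(k\log\varepsilon^{-1})$ state-preparation count.

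I expect the main obstacle to be step (ii): one must confirm that synthesizing the shifted-Laplacian inverse $P$ carries condition-number-independent overhead and, more delicately, that composing $P$ with $A$ does not inflate the block-encoding subnormalization. If $\alpha_H$ secretly grew like $k$, the final cost would degrade to $k^3$; keeping $\alpha_H=\mathcal{O}(1)$ relies entirely on the normality of $PA$ and the cancellation displayed in \eqref{eigsofPA}. I would therefore make that normalization estimate precise before assembling the complexity count, and also verify the normality-based identity $\sigma_j(PA)=|\lambda_j(PA)|$ that converts the eigenvalue bounds of \eqref{eigsofPA} into the singular-value quantities $\sigma_{\min}(PA)$ and $\|PA\|$ actually used in Theorem~\ref{thm:complexity}.
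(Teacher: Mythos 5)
Your proposal is correct and follows essentially the same route as the paper, which treats the corollary as an immediate instantiation of Theorem~\ref{thm:complexity} with $A$ replaced by $PA$: the eigenvalue formula \eqref{eigsofPA} gives $\kappa(PA)=\mathcal{O}(k)$, and $P$ itself is cheap to realize via QSVT because $\kappa(P)=\mathcal{O}((kh)^{-2})=\mathcal{O}(1)$ under the resolution condition. Your explicit normality argument (justifying $\sigma_j(PA)=|\lambda_j(PA)|$) and your check that the block-encoding subnormalization $\alpha_H$ stays $\mathcal{O}(1)$ are precisely the steps the paper leaves implicit in its ``simple but rough'' analysis, not a different approach.
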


A practical approach to solve $P$ is the QSVT~\cite{Gilyen2019QSVT}. 
Given that $P$ has a modest condition number here, the computational complexity is consequently low.

\section{Numerical experiments}\label{sec:numerical tests}

In this section, we present some experiments for Helmholtz equation.
We consider the one-dimensional Helmholtz equation on $\Omega=[0,1]$ with the Robin boundary condition (derived from the Sommerfeld radiation boundary
condition) at $r=1$:
\begin{equation*}
\begin{aligned}
&-\Delta{u}-k^2u=f,\ \textit{in}\ (0,1)\\
&u(0)=0,\ u'(1)-iku(1)=0.
\end{aligned}
\end{equation*}
Then we assume the source term $f$ to be
$$f=-\sin(kx),$$
which gives the closed form solution
$$u=-\frac{x\cos(kx)}{2k}+\sin(kx)\left(\frac{1+2e^{2ik}-2ik}{4k^2}\right).$$
By employing discretization scheme \eqref{1dcorrectionFDM}, no dispersion error leads to no pollution effect as well.
The matrix $A$ is \eqref{matA} with Robin boundary
condition and step size $h=2^{-n}$ satisfying $kh<1$.
For the discrete Fourier transform in Schr\"odingerization method, we take $R=-L=5\pi$ and $M=2^m$.
Consequently, a larger step size $h$ can reduce the termination time $T$, thereby decreasing the overall computational complexity. 
In practical applications, the value of the product $kh$ must be carefully selected to balance accuracy and efficiency.

\begin{figure}[!ht]
\centering
\subfigure[]{
\includegraphics[scale=.5]{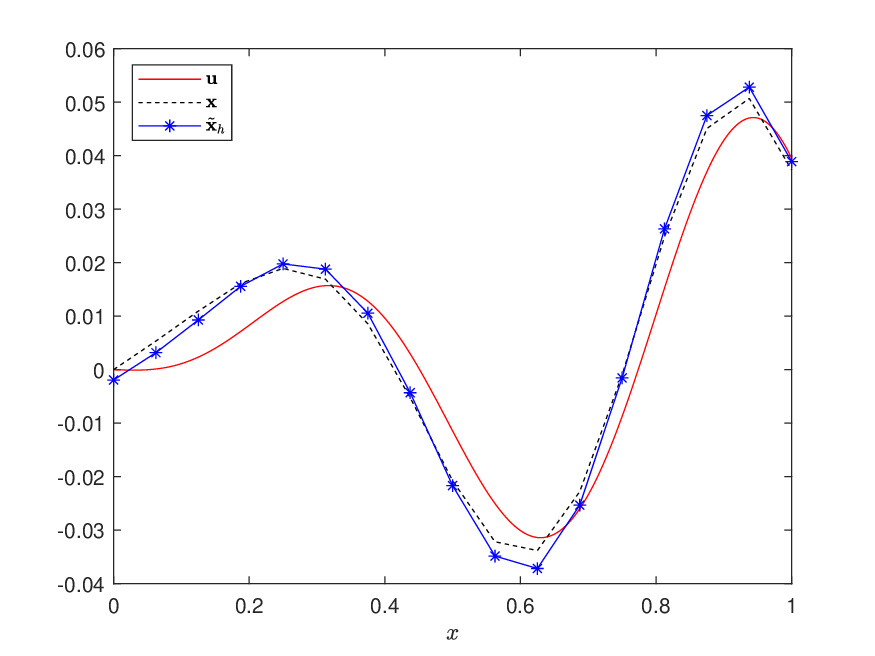}
}
\subfigure[]{
\includegraphics[scale=.5]{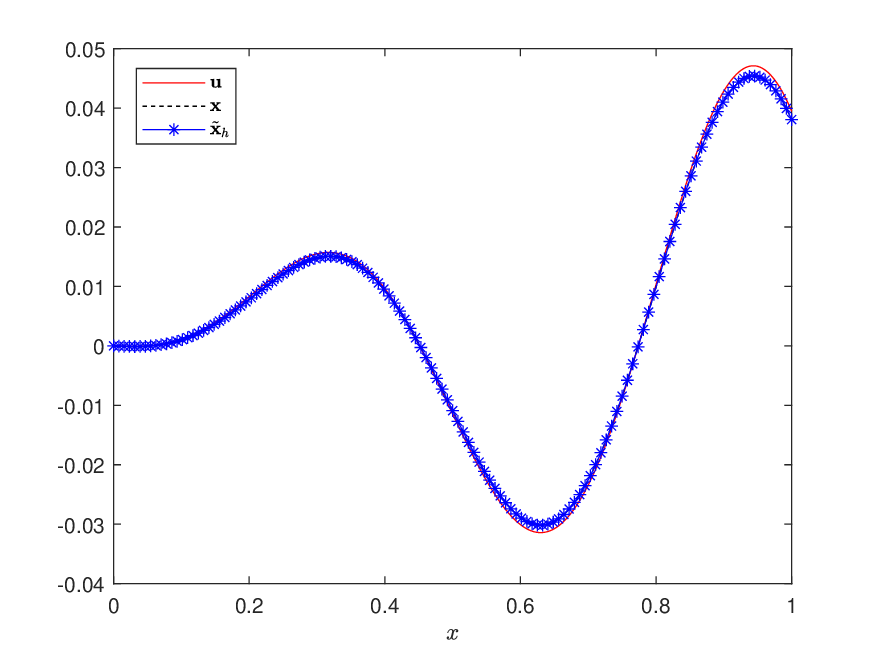}
}
\caption{Comparison between a reference solution and the numerical solution of Schr{\"o}dingerization for $k=10,m=8$. (a): $T=1\times 10^3,n=4$ ($kh=0.625$); (b): $T=5\times 10^3,n=7$ ($kh=0.078125$).}
\label{imgeg21}
\end{figure}

\begin{figure}[!ht]
\centering
\subfigure[]{
\includegraphics[scale=.5]{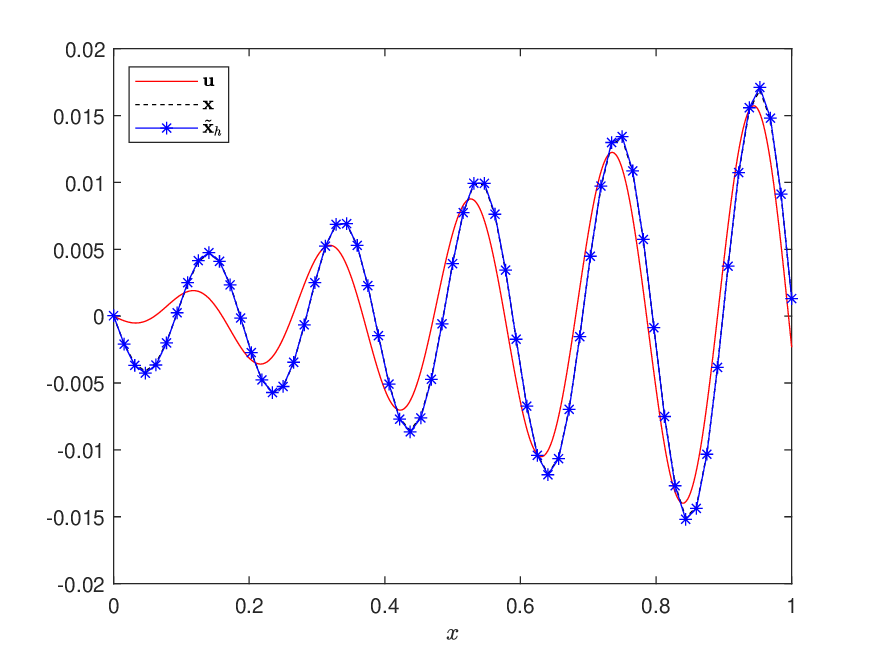}
}
\subfigure[]{
\includegraphics[scale=.5]{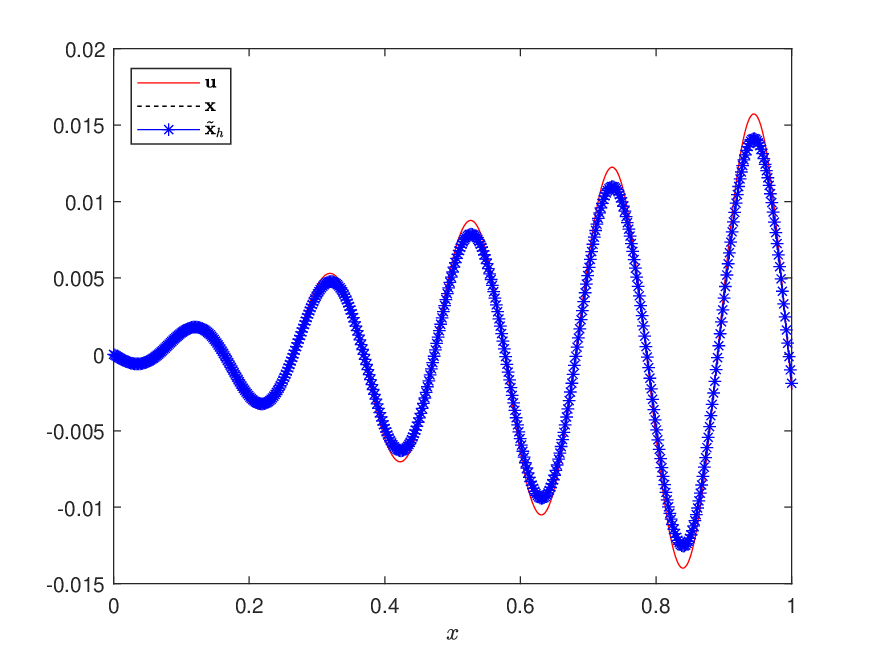}
}
\caption{Comparison between a reference solution and the numerical solution of Schr{\"o}dingerization for $k=30,m=8$. (a): $T=5\times 10^3,n=6$ ($kh=0.46875$); (b): $T=2\times 10^4,n=9$ ($kh=0.05859375$).}
\label{imgeg22}
\end{figure}

\begin{figure}[!ht]
\centering
\subfigure[]{
\includegraphics[scale=.35]{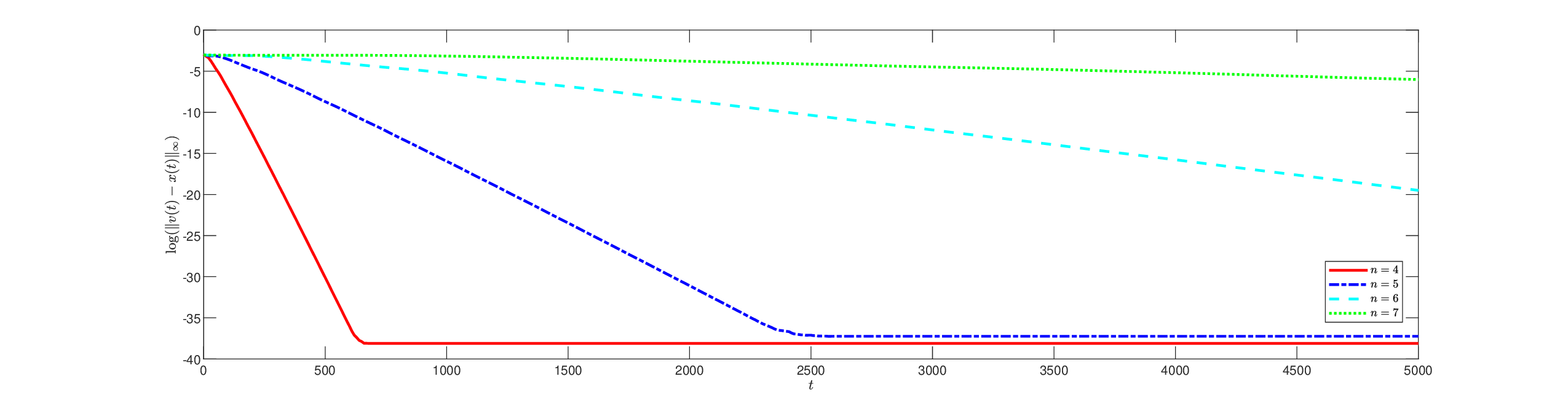}}
\subfigure[]{
\includegraphics[scale=.35]{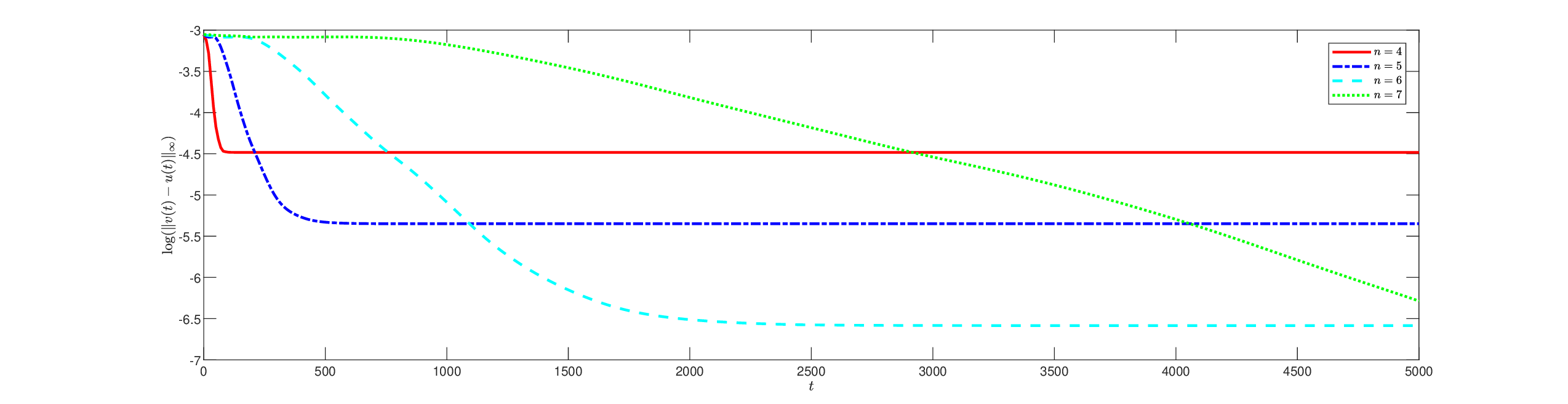}}
\caption{The convergence rates for $n=4,5,6,7$ when $k=10$. (a): $\log(\Vert\bm{v}(t)-\bm{x}(t)\Vert_\infty)$. (b): $\log(\Vert\bm{v}(t)-u(t)\Vert_\infty)$.}
\label{imgeg23}
\end{figure}

Let $\bm{x}_h$ denote the numerical solution obtained through the recovery procedure in Theorem \ref{thm:recovery u}.
From Figures \ref{imgeg21} and \ref{imgeg22}, it is evident that the oscillation of the solution can be captured by our Schr{\"o}dingerization-based approach, and the accuracy increases as $h$ decreases.
On the other hand, a finer step size requires a corresponding increase in the final time $T$.
In Figure \ref{imgeg23}, it is observed that while a smaller step size $h$ enhances the accuracy of $\bm{v}(t)$ in \eqref{1stODE} toward converging to the exact solution $u$, it simultaneously slows the convergence rate to the numerical solution $\bm{x}$. 
For the case $k=10$, the choice $n=6$ provides the optimal balance between these competing effects.
It confirms our theoretical results from Section \ref{sec:Schrforindefinite}.

\begin{figure}[!ht]
\centering
\subfigure[]{
\includegraphics[scale=.5]{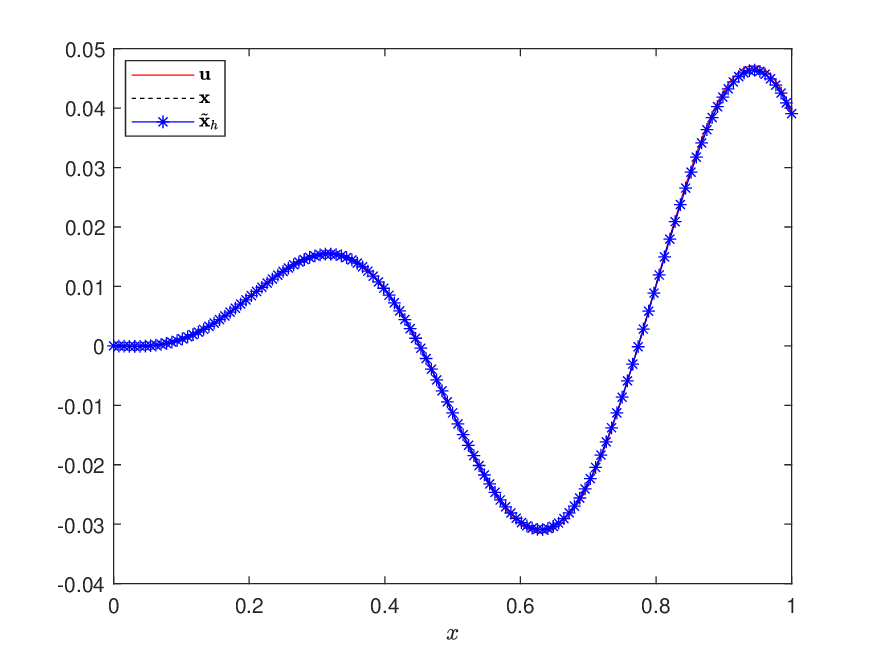}
}
\subfigure[]{
\includegraphics[scale=.5]{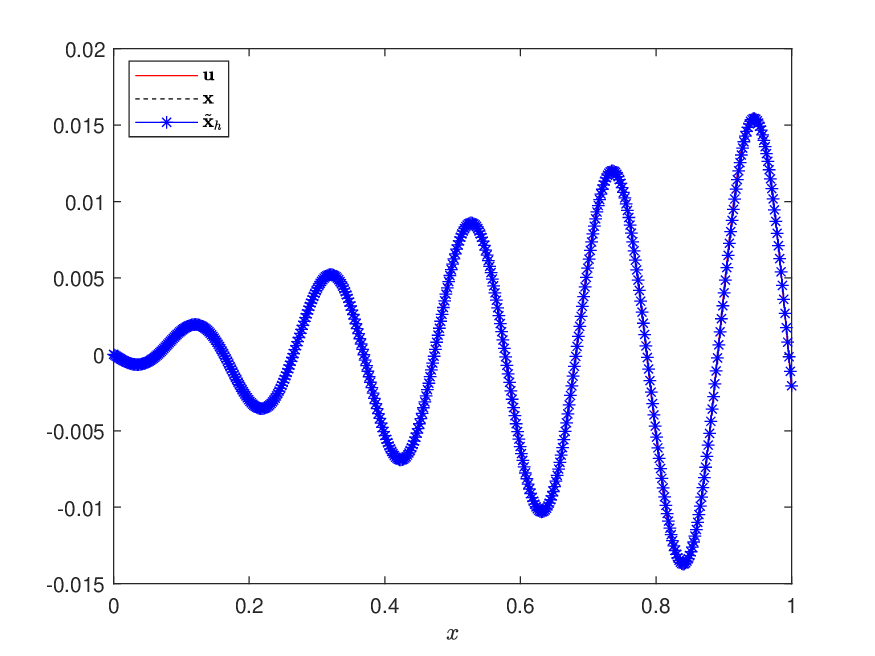}
}
\caption{Comparison between a reference solution and the numerical solution of Schr{\"o}dingerization (with preconditioner \eqref{preconditioner}) for $m=8$. (a): $k=10,T=50,n=6$; (b): $k=30,T=150,n=9$.}
\label{imgeg24}
\end{figure}

Then we adopt the preconditioning technique in section~\ref{sec:Quantum_preconditioning_method},   see Figure \ref{imgeg24}.
Due to the reduction in condition number, the termination time can be correspondingly reduced to $\mathcal{O}(k)$. 
This proves particularly advantageous for mid- and high-frequency problems.

\section{Conclusion}

The classical numerical methods for the Helmholtz equations are subject to the larger matrices (which leads to huge space and time complexity) stemming from
condition numbers and dimensions, for high wave number problems.
However, quantum computing can address these issues.
In this paper, we present quantum algorithms to solve Helmholtz equations based on the classical DDS method \eqref{steadystateeq2}, utilizing
Schr{\"o}dingerization. 
The implementation details and complexity analysis are provided. 
The total query complexity is $\mathcal{O}(\kappa^2\textit{polylog}\frac{1}{\varepsilon})$, where $\kappa=\kappa(A)$ and $\varepsilon$ represents the desired accuracy. 
Moreover, our framework is suitable for all indefinite systems and leads to a dependence on the condition number reducing from $\kappa(B)=\kappa^2(A)$ to $\sqrt{\kappa(B)}=\kappa(A)$ for the
Schr{\"o}dingerization-based approach. 
We have shown that Schr{\"o}dingerization outperforms the classic iterative algorithms in high-dimensional cases. 
By further exploiting the properties of preconditioned matrices, it will lead to a better path to solve the high-frequency problems.

Admittedly, the preconditioner currently adopted represents only an initial result and remains suboptimal. Consequently, constructing efficient quantum algorithms for advanced preconditioners (e.g., CSLP) will constitute our primary focus in the subsequent research phase.

\section*{Acknowledgement}

SJ was supported by NSFC grant Nos. 12341104 and 12426637, the Shanghai Science and Technology Innovation Action Plan 24LZ1401200,  
the Shanghai Jiao Tong University 2030 Initiative, and the Fundamental Research Funds for the Central Universities. 


\bibliographystyle{plain}
\bibliography{refs}
\end{document}